\numberwithin{equation}{section}
\newtheorem{theorem}{Theorem}[section]
\newtheorem{lemma}[theorem]{Lemma}
\newtheorem{remark}[theorem]{Remark}
\newtheorem{proposition}[theorem]{Proposition}
\newcommand{\dd}{\,\mathrm{d}}
\newcommand{\R}{\mathbb{R}}
\newcommand{\N}{\mathbb{N}}
\renewcommand{\epsilon}{\varepsilon}
\title[SDEs with fBm: dependence on the Hurst parameter]{Stochastic differential equations driven by fractional Brownian motion: dependence on the Hurst parameter}
\author[Kwossek]{Anna P. Kwossek}
\address{Anna P. Kwossek, University of Vienna, Austria}
\email{anna.paula.kwossek@univie.ac.at}
\author[Neuenkirch]{Andreas Neuenkirch}
\address{Andreas Neuenkirch, University of Mannheim, Germany}
\email{neuenkirch@uni-mannheim.de}
\author[Pr{\"o}mel]{David J. Pr{\"o}mel}
\address{David J. Pr{\"o}mel, University of Mannheim, Germany}
\email{proemel@uni-mannheim.de}
\date{\today}
\begin{document}
	
	\begin{abstract}
				Stochastic models with fractional Brownian motion  as source of randomness have become popular since the early 2000s. 
		 Fractional Brownian motion (fBm) is a Gaussian process, whose covariance depends on the so-called Hurst parameter $H\in (0,1)$. Consequently, stochastic models with fBm also depend on the Hurst parameter $H$, and the stability of these models with respect to $H$ is an interesting and important question. In recent years, the continuous (or even smoother) dependence on the Hurst parameter has been studied for several stochastic models, including stochastic integrals with respect to fBm, stochastic differential equations (SDEs) driven by fBm  and  also stochastic partial differential equations with fractional noise, for different topologies, e.g., in law or almost surely, and for finite and infinite time horizons.
			In this manuscript, we give an overview of these results with a particular focus on SDE models.
	\end{abstract}
	
	\maketitle
	
	\noindent \textbf{Key words:} fractional Brownian motion, Hurst parameter, Mandelbrot--van Ness representation,  stochastic differential equations, stationary and ergodic solutions, Riemann--Stieltjes integration, rough path theory, dependence in law, pathwise dependence. 
	
	\noindent \textbf{MSC 2020 Classification.} Primary: 60G22, 60H10; Secondary: 60L20, 37H10.

\date{\today}

\maketitle

\section{Introduction}

Fractional Brownian motion (fBm) is  a  Gaussian process $B^H=(B^H_t)_{t \geq 0}$  with continuous sample paths, zero mean, i.e.,
 $$  \mathbf{E} \big [B^H_t \big]=0, \qquad t \geq 0,$$
 and covariance
$$  \mathbf{E} \big[ B^H_sB^H_t \big] =\frac{1}{2}\big(|s|^{2H}+|t|^{2H}-|t-s|^{2H}\big), \qquad s,t \geq 0.$$
The parameter $H \in (0,1)$ is the so-called Hurst parameter.
Fractional Brownian motion was introduced by Kolmogorov  \cite{kolmogorov} in 1940, and it was reinvented
and popularized  by  Mandelbrot and van Ness  \cite{MandelbrotVanNess}  in 1968. 
In \cite{MandelbrotVanNess} also the terms `fractional Brownian motion' and `Hurst parameter'  were coined, and moreover a representation of fBm as a Volterra process with a  kernel function depending on $H$  was given.

Note that  for $H=\frac{1}{2}$  fractional Brownian motion coincides with the standard Brownian motion, since in this case we have
$$  \mathbf{E} \big{[} B^{\frac{1}{2}}_s B^{\frac{1}{2}}_t \big ] = \min \{s,t\}, \qquad s,t \geq 0.$$
While fBm possesses some properties similar to Brownian motion, as, e.g., self-similarity and stationary increments, fBm is neither a semi-martingale nor a Markov process for $H \neq \frac{1}{2}$.  Moreover, the increments of fBm are negatively correlated if $H<\frac{1}{2}$ and positively correlated   for $H>\frac{1}{2}$. In the latter case, the increments of fBm also exhibit a long-range dependence property.  Due to the classical Kolmogorov continuity theorem almost all sample paths of fBm are H\"older continuous of all orders $\lambda <H$ on compact time intervals, i.e., for all $T>0 $ and almost all $\omega \in \Omega$ we have
$$  \sup_{s,t \in [0,T] } \frac{|B^H_t(\omega)-B^H_s(\omega)|}{|t-s|^{\lambda}} < \infty.$$
For these and further properties, see, e.g., Chapter 5 in \cite{nualartbook} or Chapter 2 in \cite{nourdin_book}.

\smallskip

Fractional Brownian motion has found applications in various fields, which include hydrology (cf.~ \cite{Hurst,hydro}),
turbulence models (cf.~\cite{Fann_Komm,Flandoli}),  electrical engineering (cf.~\cite{DMS,DR}), biophysics (cf.~\cite{ko,ks,Richard:train})
and mathematical finance (cf.~\cite{guasoni,Bender,gatheral_bayer,gatheral1}).
Consequently, various stochastic models with fBm as noise input have been studied in recent years, e.g., stochastic integrals, stochastic differential equations (SDEs) and  also stochastic partial differential equations.
Since fractional Brownian motion depends on the Hurst parameter $H$, it is a natural question to analyse the dependence of these models on the Hurst parameter. In particular, stability results which quantify the deviation of the fractional Brownian model (with $H \neq \frac{1}{2}$) from the standard Brownian model (with $H=\frac{1}{2}$) are of obvious interest. 
Moreover, such a stability analysis is also useful for statistical applications, see, e.g.,  \cite{HRstat}.

Before we turn to SDEs driven by fBm, note that the question of the dependence on the Hurst parameter is well-posed. For fBm itself, we have trivially a smooth dependence of its law:
For $t>0$, $f \colon \mathbb{R} \rightarrow \mathbb{R}$ measurable and of exponential growth the map
$$ (0,1) \ni H \, \mapsto \, \mathbf{E} [f(B^H_t)] \in \mathbb{R}$$
is infinitely differentiable, since we have the representation
$$ \mathbf{E} [f(B^H_t)] = \frac{1}{\sqrt{2\pi t^{2H}}}\int_{\mathbb{R}}f(x) \exp \left( -\frac{x^2}{2t^{2H}}\right) \dd x.$$ Moreover, in the context of constructing  multi-fractional Brownian motions it has been shown (Theorem 4 in \cite{pelletier}) that the Mandelbrot--van Ness representation of fBm, see \cite{MandelbrotVanNess} and  the following Section \ref{sec:MvN}, satisfies
$$ \lim_{\delta \rightarrow 0} \, \sup_{ \substack{H,H' \in [a,b] \\ |H-H'| \leq \delta} } \,  \sup_{t \in [0,T]} | B_t^H-B_t^{H'}| =0  $$ almost surely
for any $0<a<b<1$ and $T>0$. Thus we have (at least) a uniform continuous dependence on the Hurst parameter for almost all sample paths on $[0,T]$.

\medskip

\subsection{SDEs driven by fBm}

Stochastic differential equations driven by fractional Brownian motion are usually treated as pathwise integral equations. For the driving  $m$-dimensional fractional Brownian  motion $B^H$, i.e., $$ B^H= \begin{pmatrix}B^{H,1} \\   {}^{\vdots} \\  B^{H,m}  \end{pmatrix},$$ where $B^{H,1}, \ldots, B^{H,m}$ are independent copies of a one-dimensional fractional Brownian motion with Hurst parameter $H$, one 
considers the sample paths $g_t=B^H_t(\omega)$, $ t \geq 0,$ for fixed $\omega \in \Omega$, and analyses the deterministic integral equation
\begin{align}\label{det_int}
	x_t = x_0+  \int_0^t \mu(x_s) \dd s +  \int_0^t \sigma(x_s) \dd g_s, \quad t \geq 0,
\end{align}
with $x_0 \in \R^d$ and sufficiently smooth $\mu \colon \R^d \rightarrow \R^d$, $\sigma \colon \R^d \rightarrow \R^{d \times m}$. 
The latter equation reads in componentwise form as
\begin{align*}
	x_t^i = x_0^i+  \int_0^t \mu^i(x_s) \dd s +  \sum_{j=1}^m \int_0^t \sigma^{ij}(x_s) \dd g^j_s, \quad t \geq 0, \quad i=1, \ldots, d.
\end{align*}
Based on the dimension $m$ of the driving fBm and the properties of the coefficients $\mu$ and $\sigma$, different  techniques have been applied to analyse SDEs driven by fBm, see, e.g., \cite{Doss,Sussmann,NR,CoutinQian,gubinelli,fractional_rough, fractional_regular,FrizVictoir10} and the monographs \cite{Lyons_Qian,FrizVictoir,FrizHairer}. In particular, for $m>1$ and non-additive noise the choice of the technique is dictated by the path regularity of the driving fBm
measured in terms of its $p$-variation- or H\"older-regularity. For $H>1/2$ the classical Young integration theory \cite{Young} can be used here, while for $1/4<H\leq 1/2$  one typically relies on rough path theory, which was initiated by T. Lyons in \cite{TL1,TL2}. Several variants of the rough path approach have been now developed, see, e.g., \cite{Davie}, \cite{fractional_rough}, especially the controlled rough path approach of M. Gubinelli \cite{gubinelli}.

As a consequence of the pathwise approach, the dependence of the solution of
\begin{align*}
	X_t^H= x_0+  \int_0^t \mu(X^H_s) \dd s +  \int_0^t \sigma(X^H_s) \dd B^H_s, \quad  t \geq 0,
\end{align*}
on the Hurst parameter $H$ can be analysed in  many cases in terms of the smoothness of the solution map $x=\Gamma(g)$ of the equation \eqref{det_int} with respect to the driving path $g$.

Of particular interest are the following questions:

\begin{itemize}
	\item[(1)] Does for a given $t>0$ and suitable $\mathcal{H}\subset(0,1)$ the law of $X_t^H$ depend continuously on $H$? And if yes, does the map 
	$$ \mathcal{E}_{\varphi,\mathcal{H},t}\colon \mathcal{H} \rightarrow \R, \qquad \mathcal{E}_{\varphi,\mathcal{H},t}(H)= \mathbf{E} \big[ \varphi(X_t^H) \big],$$ exhibit Lipschitz properties  (for sufficiently regular $\varphi \colon \R^d  \rightarrow \R$)? 
	\item[(2)] Is  $X_t^H(\omega)$  for fixed $\omega \in \Omega$ differentiable or (globally) Lipschitz continuous with respect to $H$? Or put more precisely: does for
	fixed $\omega \in \Omega$,  suitable $\mathcal{H}\subset(0,1)$ and $t>0$ the map
	$$ \mathcal{S}_{\mathcal{H},t}\colon  \Omega \times  \mathcal{H} \rightarrow \R, \qquad \mathcal{S}_{\mathcal{H},t}(\omega,H)=X^H_t(\omega),$$
	exhibit these smoothness properties?
\end{itemize}
We will discuss these and related questions for different types of SDEs in the remainder of the manuscript. Note that for the second question we will always use and rely on the  Mandelbrot--van Ness representation from Section \ref{sec:MvN}. Our aim is to give an overview of the main results; the required mathematical techniques  are manifold and for them we will provide the appropriate pointers to the literature.

\medskip

\subsection{Setup and standard definitions} In the following we will work always on a complete probability space $(\Omega, \mathcal{A}, \mathbf{P})$ which is rich enough for all objects to be well-defined. We will use the notation $X \stackrel{a.s.}{=} Y$ for the almost sure equality of two random variables $X$ and $Y$ as well as $X \stackrel{\mathcal{L}}{=} Y$ for equality of their laws. Similarly, we will denote by $X_n \stackrel{\mathcal{L}}{\rightarrow} Y$ the convergence in law of the sequence of random variables $X_n$, $n \in \N,$ to $Y$.

For a set $D$, we will denote by $\mathbf{1}_D$  the indicator function of this set. For a vector $x \in \R^d$ we will denote its Euclidean norm by $\| x \|$, while for a matrix $A \in \R^{d \times d}$ the quantity $\|A\|$ is its corresponding operator norm. In the  case $d=1$ we simply write $| \cdot |$ instead of $\| \cdot \|$.
For $\lambda \in [0,1)$, a domain $O \subset \R^d$ and a function  $f\colon \overline{O} \rightarrow \R^{\ell}$ we denote the $\lambda$-H\"older norm by  
 $\| \cdot \|_{\lambda, O}$, i.e., we set
\begin{align*}
	\|f \|_{\lambda,O} \, = \sup_{t \in {O} }\,  \|f(t)\| \, + \sup_{s,t \in {O}} \frac{\|f(t)-f(s)\|}{\|t-s\|^{\lambda}}. 
\end{align*} Moreover, we denote by $C^{\lambda}(\overline{O};\R^{\ell})$ the space of all functions  $f \colon \overline{O} \rightarrow \R^{\ell}$ with a finite $\lambda$-H\"older norm. As usual we define by $C^{k}(\R^{\ell_1};\R^{\ell_2})$ the space of all functions $f \colon \R^{\ell_1} \rightarrow \R^{\ell_2}$ which are $k$-times continuously differentiable. By  $C^{k}_b(\R^{\ell_1};\R^{\ell_2})$ we denote the subset of functions $f \in  C^{k}(\R^{\ell_1};\R^{\ell_2})$ which are bounded together with their derivatives, while $C^{k}_{pol}(\R^{\ell_1};\R^{\ell_2})$ denotes the subset of functions   $ f \in C^{k}(\R^{\ell_1};\R^{\ell_2})$ which are of polynomial growth together with their derivatives.

For completeness, we recall the notion of Fr\'echet differentiability: let 
$ (U,\|{\cdot}\|_U)$ and $(V,\|{\cdot}\|_V)$ be two normed vector spaces and  let $U' \subset U$ be an open subset. An operator $ A \colon U' \to V$ is called Fr\'echet differentiable at $u \in U'$ if there exists a bounded linear operator $A'(u)\colon U  \to V$ such that
$$ \lim_{\|h\|_U\to 0} \frac{1}{\|h\|_U}\, \big\| A(u+h)-A(u)-A'(u)h \big\|_V=0.$$
If $A$ is Fr\'echet differentiable for  all $u \in U'$, then the operator is called Fr\'echet differentiable with Fr\'echet derivative $A'\colon U' \to \mathcal{L}(U,V)$, where $\mathcal{L}(U,V)$ denotes the space of  bounded linear
operators from $U$ to $V$.

\bigskip
\section{The Mandelbrot--van Ness representation of fBm and its dependence on the Hurst parameter}\label{sec:MvN}
 Let  $B=(B_t)_{t \in \R}$ be a two-sided Brownian motion, i.e., we have
$$ B_t=\left \{ \begin{array}{cc} W^{(1)}_t, & t\geq 0, \\ W^{(2)}_{-t}, & t <0, \end{array} \right. $$ 
	where $W^{(1)}= (W^{(1)}_t)_{t \geq 0}$ and $W^{(2)}= (W^{(2)}_t)_{t \geq 0}$ are two independent standard Brownian motions.
Then the process
\begin{align*} 
	B_t^H = C_H\int_\R K_H(s,t)\dd B_s, \quad t \in \R,
\end{align*}
with
\begin{align*}
	C_H =  \frac{\sqrt{ \sin(\pi H) \Gamma(2H+1) }}{\Gamma(H+1/2)}
\end{align*}
and
\begin{align*}
	K_H(s,t) & = \big(|t-s|^{H-1/2} -|s|^{H-1/2}\big) \mathbf{1}_{(-\infty,0)}(s)
	+ |t-s|^{H-1/2}\,  \mathbf{1}_{[0,t)}(s),
\end{align*}
defines a fBm  with Hurst parameter $H \in (0,1)$. Here, $\Gamma$ denotes the classical Gamma function. Since $x^{0}=1$, we recover in particular that
$B^{1/2}_t=B_t$, $t \in \R$. This representation of fractional Brownian motion goes back to the seminal article \cite{MandelbrotVanNess} and is called Mandelbrot--van Ness representation. Note that in \cite{MandelbrotVanNess}, a different normalization constant is used, namely
$$ C_H^{\textrm{MvN}}= \frac{1}{\Gamma(H+1/2)},$$
which leads to the variance  
$$  \mathbf{E} \left[ \left| C_H^{\textrm{MvN}} \int_\R K_H(s,t)\dd B_s \right|^2 \right]  = \frac{1}{\sin(\pi H) \Gamma(2H+1)}|t|^{2H}, \qquad t \in \R, $$ instead of 
$$   \mathbf{E}  \big[ |B_t^H|^2 \big]=|t|^{2H}, \qquad t \in \R. $$ 
See, e.g., Proposition 9.1 in \cite{taqqu}.
\smallskip

\begin{remark}
	The above extension of fBm to the time domain $\R$ instead of $[0, \infty)$ is helpful for analyzing the long-time behaviour of SDEs driven by fBm, see  Section 3.1 in \cite{Hairer} and the following Subsection \ref{subsec:ergodic}.
	\end{remark}

\smallskip

One of the main results of  \cite{KN19} is the  theorem below (Theorem 1.1 in \cite{KN19} and Theorem 2.1.1 in \cite{Koch}). Its proof relies on a stochastic Fubini theorem, see \cite{HuttonNelson}, and Kolmogorov's continuity theorem.

\begin{theorem}\label{thm:SmoothnessRegularityFBM}
	Let $k \in \mathbb{N}$ and $T >0$. Then, there exists a process $B^{H,k} = (B_t^{H,k})_{t \in [0,T]}$  such that:
	\begin{itemize}
		\item[(i)] For all $\omega \in \Omega$ the sample paths $(0,1) \times [0,T] \ni (H,t) \mapsto B_t^{H,k}(\omega) \in \mathbb{R}$ are continuous.
		\item[(ii)] For all $\omega \in \Omega$ and for any fixed $H \in (0,1)$ and $\lambda \in (0,H)$ the sample paths  $ [0,T] \ni t \mapsto B_t^{H,k}(\omega) \in \mathbb{R}$ are $ \lambda$-H\"older continuous. We even have, for all $0<a< b<1$ and $0<\gamma <a$, that there exists a non-negative random variable $K_{\gamma, a,b,k,T}>0$  such that
		\begin{align*}
			\sup_{H \in [a,b]} \big|B^{H,k}_t(\omega) - B^{H,k}_s(\omega)\big| \leq K_{\gamma, a,b,k,T}(\omega) |t-s|^\gamma, \qquad s,t \in [0,T].
		\end{align*}
		\item[(iii)]  For all $0<a< b <1$, $t \in [0,T]$ there exists $\Omega_{a,b,k,t} \in \mathcal{A}$ such that $\mathbf{P}(\Omega_{a,b,k,t})=1$ and
		\begin{align*}
			\frac{\partial^k}{\partial H^k} B_t^H(\omega) = B^{H,k}_t(\omega), \qquad H \in [a,b], \,\, \omega \in \Omega_{a,b,k,t}.
		\end{align*}
	\end{itemize}
\end{theorem}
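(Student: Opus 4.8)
The plan is to obtain the candidate process $B^{H,k}$ by formally differentiating the Mandelbrot--van Ness representation $k$ times in $H$ and to justify this differentiation a posteriori. Writing $B_t^H = C_H \int_\R K_H(u,t)\,\dd B_u$ and applying the Leibniz rule, the natural candidate is
\begin{align*}
	B_t^{H,k} := \sum_{j=0}^k \binom{k}{j}\, \frac{\d^j C_H}{\d H^j}\int_\R \frac{\partial^{k-j}}{\partial H^{k-j}} K_H(u,t)\,\dd B_u,
\end{align*}
where the $H$-derivatives of the kernel are computed explicitly: since $\partial_H^m |r|^{H-1/2} = |r|^{H-1/2}(\log|r|)^m$, each $\partial_H^{k-j} K_H(u,t)$ has the same singularity structure as $K_H$ multiplied by powers of $\log|t-u|$ and $\log|u|$. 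First I would check that every Wiener integral appearing here is well defined, i.e.\ that its integrand lies in $L^2(\R)$ uniformly for $H$ in compact subsets of $(0,1)$; this reduces to controlling the integrand near $u=t$, near $u=0$, and as $u\to-\infty$, where the logarithmic factors are harmless and square-integrability holds precisely because $H\in(0,1)$.

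Because each $B_t^{H,k}$ is a Wiener integral, it is a centred Gaussian random variable, so all of its moments are governed by the $L^2$-norm of the integrand; this is the source of every quantitative estimate. The heart of the argument is therefore a family of bounds of the form
\begin{align*}
	\big\| \text{integrand}(\,\cdot\,;t,H) - \text{integrand}(\,\cdot\,;t',H') \big\|_{L^2(\R)}^2 \;\lesssim\; |t-t'|^{2\gamma} + |H-H'|^2,
\end{align*}
valid uniformly for $t,t'\in[0,T]$ and $H,H'\in[a,b]$, with $\gamma$ any exponent below $a$. For the $H$-increment I would use the mean value theorem in $H$ to extract the factor $|H-H'|$ and then bound the $L^2$-norm of the next kernel derivative uniformly on $[a,b]$; for the $t$-increment I would exploit the scaling of the kernel, which reproduces the classical estimate $\E|B_t^H-B_{t'}^H|^2=|t-t'|^{2H}$ up to logarithmic corrections and hence yields H\"older regularity of every order below $H$.

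With these moment bounds in hand, parts (i) and (ii) follow from Kolmogorov's continuity theorem: a two-parameter version applied on $[a,b]\times[0,T]$ produces a modification that is jointly H\"older continuous in $(H,t)$, which is (i); and since the resulting random H\"older constant is independent of $H$, setting $H=H'$ and taking the supremum over $H\in[a,b]$ yields precisely the uniform bound (ii). For part (iii) I would invoke the stochastic Fubini theorem: for $k=1$, writing
\begin{align*}
	B_t^{H'}-B_t^{H} = \int_\R \int_H^{H'} \frac{\partial}{\partial\eta}\big(C_\eta K_\eta(u,t)\big)\,\dd\eta\,\dd B_u = \int_H^{H'} B_t^{\eta,1}\,\dd\eta,
\end{align*}
where the first equality is the fundamental theorem of calculus inside the integrand and the second equality, the interchange of $\dd\eta$ and $\dd B_u$, is exactly the content of the stochastic Fubini theorem; since $\eta\mapsto B_t^{\eta,1}$ is continuous by (i), differentiating in $H'$ gives $\partial_H B_t^H = B_t^{H,1}$, and one iterates this for general $k$.

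I expect the main obstacle to be the $L^2$-estimates of the kernel differences with the correct exponents. The three distinct singular regimes ($u\to t$, $u\to 0$, $u\to-\infty$) must each be handled separately, and the logarithmic factors produced by differentiation in $H$, together with the requirement of uniformity over the compact parameter window $[a,b]$, make these computations the technically demanding core of the proof. Moreover, verifying the integrability hypothesis that licenses the stochastic Fubini theorem in part (iii) rests on exactly the same bounds, so a careful and uniform treatment of these $L^2$-norms underpins all three assertions simultaneously.
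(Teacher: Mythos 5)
Your proposal follows essentially the same route as the cited proof (Theorem 1.1 in \cite{KN19}): differentiate the Mandelbrot--van Ness kernel in $H$ to define the candidate $B^{H,k}$ via Wiener integrals (your formula for $k=1$ reproduces exactly the representation $\partial_H B^H_t=(\partial_H C_H)\int_\R K_H(s,t)\dd B_s+C_H\int_\R \partial_H K_H(s,t)\dd B_s$ recorded in the paper), control the $L^2$-norms of kernel increments uniformly on $[a,b]\times[0,T]$, apply a two-parameter Kolmogorov continuity argument for (i) and (ii), and use the stochastic Fubini theorem of \cite{HuttonNelson} together with the fundamental theorem of calculus to identify $B^{H,k}$ with the $k$-th $H$-derivative in (iii). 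This is exactly the strategy the paper attributes to \cite{KN19}, so the proposal is correct and not materially different.
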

Thus,  for $t \in [0,T]$ and almost all $\omega \in \Omega$ the map $[a,b] \ni H \mapsto B_t^{H}(\omega) \in \R$ is infinitely differentiable and the derivatives are uniformly  H\"older continuous in $t$  of all orders $\gamma <H$. 
 Moreover, the proof of the above result yields  the representation
\begin{align*} \partial_H B^H_t:= B^{H,1}_t= \big(\partial_H C_H\big) \int_\R K_H(s,t) \dd B_s + C_H \int_\R \partial_H K_H(s,t) \dd B_s, \quad t \in [0,T], \,\, a.s.,\end{align*}
for the first derivative as well as
\begin{align}\label{moment_del_B} \sup_{H \in [a,b]} \sup_{t \in [0,T]} \mathbf{E} \big[ |\partial_H B^H_t |^p \big]< \infty 
\end{align} for all $T>0$, $p \geq 1$ and $[a,b] \subset (0,1)$.  Using the process $\partial_H B^H$  one can construct a process $\widehat{B}^H$ that is indistinguishable from $B^H$ and Fr\'echet differentiable with derivative $\partial_H B^H$, see Lemma 5.1 in \cite{KN19}:

\begin{proposition}\label{MvN-Frechet} Let $H\in (0,1)$ and choose $a,b \in (0,1)$ such that $a\leq H \leq b$ and $\frac{1}{2} \in [a,b]$. Define  \begin{align*}
		\widehat{B}_t^H = B_t^{\frac{1}{2}} +  \int_{\frac{1}{2}}^H \partial_h B_t^{h}\dd h , \qquad H \in [a,b], \,\, t \in [0,T].
		\end{align*} 
	Then, we have
	\begin{itemize}
		\item[(1)]  that for fixed $H \in [a,b]$ the processes $\widehat{B}^H$ and $B^H$  are indistinguishable and 
		\item[(2)]  for all $\omega \in \Omega$ the map $ [a,b] \ni H \mapsto \widehat{B}^H(\omega) \in C([0,T];\R)$ is  Fr\'echet differentiable  with derivative $\partial_H B^H$, that is,
		\begin{align*}
			& \lim_{\delta \rightarrow 0} \frac{\sup_{t \in [0,T]} |\widehat{B}^{H+\delta}_t(\omega) - \widehat{B}^H_t(\omega) - \partial_H {B}^{H}_t(\omega)\delta |}{|\delta|}=0.
		\end{align*}
	\end{itemize}
	
\end{proposition}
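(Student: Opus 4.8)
The plan is to exploit that $\widehat{B}^H$ is by construction the pathwise primitive (in the variable $H$) of the jointly continuous derivative process $\partial_h B^h = B^{h,1}$ furnished by Theorem~\ref{thm:SmoothnessRegularityFBM}. Both claims then reduce to the fundamental theorem of calculus applied, for fixed $\omega$, to the map $h \mapsto B_t^{h,1}(\omega)$, combined with the joint continuity in $(h,t)$ from part~(i). A point worth flagging immediately is the asymmetry between the two claims: part~(2) will be a genuinely pathwise statement, true for every $\omega$, whereas the only probabilistic input (almost sure differentiability of $H \mapsto B_t^H$) is used in part~(1).

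For part~(1), I would fix $H$ and use part~(iii) of Theorem~\ref{thm:SmoothnessRegularityFBM} with $k=1$: for each fixed $t$ there is a full-measure event on which $h \mapsto B_t^h(\omega)$ is differentiable on $[a,b]$ with continuous derivative $h \mapsto B_t^{h,1}(\omega)$. The fundamental theorem of calculus then gives
\begin{align*}
 B_t^H(\omega) = B_t^{1/2}(\omega) + \int_{1/2}^H \partial_h B_t^h(\omega)\dd h = \widehat{B}_t^H(\omega)
\end{align*}
almost surely, for each fixed $t$. To pass from this fixed-time statement to indistinguishability I would note that both $t \mapsto B_t^H(\omega)$ and $t \mapsto \widehat{B}_t^H(\omega)$ are continuous — the latter because $(h,t) \mapsto B_t^{h,1}(\omega)$ is jointly continuous, so its $H$-integral is continuous in $t$ — and then intersect the full-measure events over a countable dense set of times, concluding by path continuity.

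For part~(2), fix $\omega$ and $H$, and take $\delta$ small enough that $H+\delta \in [a,b]$. The increment telescopes, giving, for every $t \in [0,T]$,
\begin{align*}
 \widehat{B}_t^{H+\delta}(\omega) - \widehat{B}_t^H(\omega) - \delta\,\partial_H B_t^H(\omega) = \int_H^{H+\delta}\big(B_t^{h,1}(\omega) - B_t^{H,1}(\omega)\big)\dd h.
\end{align*}
Bounding the right-hand side by $|\delta|\sup_{|h-H|\leq|\delta|}|B_t^{h,1}(\omega) - B_t^{H,1}(\omega)|$, dividing by $|\delta|$, and taking the supremum over $t$ leads to
\begin{align*}
 \frac{1}{|\delta|}\sup_{t \in [0,T]}\big|\widehat{B}_t^{H+\delta}(\omega) - \widehat{B}_t^H(\omega) - \delta\,\partial_H B_t^H(\omega)\big| \leq \sup_{t \in [0,T]}\ \sup_{|h-H|\leq|\delta|}\big|B_t^{h,1}(\omega) - B_t^{H,1}(\omega)\big|.
\end{align*}
Since $(h,t) \mapsto B_t^{h,1}(\omega)$ is continuous on the compact set $[a,b]\times[0,T]$, it is uniformly continuous there, so the right-hand side vanishes as $\delta \to 0$. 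Finally I would check that $\delta \mapsto \delta\,\partial_H B^H(\omega)$ is a bounded linear operator $\R \to C([0,T];\R)$, which holds since $t \mapsto \partial_H B_t^H(\omega)$ is continuous and hence bounded on $[0,T]$; this identifies the Fr\'echet derivative.

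The hard part is the uniform-in-$t$ control required for \emph{Fr\'echet} rather than merely pointwise (G\^ateaux) differentiability: one needs the modulus of continuity of $h \mapsto B_t^{h,1}(\omega)$ to be uniform in $t$, which is exactly what joint continuity plus compactness of $[a,b]\times[0,T]$ provides. Separate continuity in each variable would only yield G\^ateaux differentiability, so the joint continuity in part~(i) of Theorem~\ref{thm:SmoothnessRegularityFBM} is the decisive ingredient.
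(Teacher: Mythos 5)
Your proof is correct and takes the route the paper intends: the paper itself only cites Lemma 5.1 of \cite{KN19}, but the argument there is exactly yours, namely the fundamental theorem of calculus in $H$ on the full-measure sets from Theorem \ref{thm:SmoothnessRegularityFBM}(iii) for part (1), upgraded to indistinguishability by path continuity, and the telescoped integral bound combined with uniform continuity of $(h,t)\mapsto B^{h,1}_t(\omega)$ on $[a,b]\times[0,T]$ from part (i) for the Fr\'echet statement in part (2). Your remark that part (2) is purely pathwise while the probabilistic input enters only in part (1) is accurate and worth keeping.
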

In the following, we will identify the fBms $B^{H}$ and $\widehat{B}^H$ and  we will refer to them as Mandelbrot--van Ness fractional Brownian motion (MvN-fBm).

\smallskip

The $H$-dependence of the MvN-fBm has been also analysed in \cite{HRtheo}, but with a particular focus on the long-time dependence. In  Theorem 3.1 of \cite{HRtheo} the authors obtain the following result:

\begin{theorem}\label{thm:whole-regularity}
	Let $[a,b] \subset (0,1)$.
	Then, for any $\varepsilon \in (0,1)$ and any $p\geq 1$,
	there exists a non-negative random variable $ K_{a,b,\varepsilon,p}$ with  $ \mathbf{E}[|K_{a,b,\varepsilon,p}|^p]<\infty$
	such that 
	\begin{align*}
		| B_t^H - B_{t'}^{H'} |  \leq K_{a,b,\varepsilon,p}\, (1+t')^{2\varepsilon a+ b}  \left(  \min \{ 1,  |t-t'|^{a} \} + |H-H'| \right)^{1-\varepsilon}, \,\, \, t' \ge t \ge 0, \,\, H,H' \in [a,b],
	\end{align*} almost surely.
	\end{theorem}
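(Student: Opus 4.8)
The plan is to treat $(H,t)\mapsto B_t^H$ as a centered Gaussian random field on $[a,b]\times[0,\infty)$ and to deduce the pathwise bound from a moment estimate on its increments via a Kolmogorov-type continuity theorem with a polynomial weight in time. Since $B_t^H-B_{t'}^{H'}$ is a centered Gaussian random variable, all its $L^p$-moments are controlled by its variance, namely $\mathbf{E}\big[|B_t^H-B_{t'}^{H'}|^p\big]=c_p\,\big(\mathbf{E}\big[|B_t^H-B_{t'}^{H'}|^2\big]\big)^{p/2}$ for a constant $c_p$ depending only on $p$. Hence the whole problem reduces to a sharp, time-weighted estimate of the second moment of the increments, after which the pathwise statement follows by a chaining argument.

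For the variance I would split the increment along the two parameter directions,
\begin{align*}
 B_t^H-B_{t'}^{H'}=\big(B_t^H-B_{t'}^H\big)+\big(B_{t'}^H-B_{t'}^{H'}\big),
\end{align*}
and estimate the two terms separately. The time increment at fixed $H$ is handled exactly by the stationary-increment property, $\mathbf{E}\big[|B_t^H-B_{t'}^H|^2\big]=|t-t'|^{2H}$; since $H\in[a,b]$ and $|t-t'|=t'-t\le t'$, this is bounded by $(\min\{1,|t-t'|^a\})^2$ when $|t-t'|\le1$ and by $(1+t')^{2b}$ when $|t-t'|>1$, so in either case by $(1+t')^{2b}\,(\min\{1,|t-t'|^a\})^2$. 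For the increment in $H$ I would use the derivative representation underlying Proposition~\ref{MvN-Frechet}, writing
\begin{align*}
 B_{t'}^H-B_{t'}^{H'}=\int_{H'}^H \partial_h B_{t'}^h\dd h,
\end{align*}
so that by Minkowski's inequality $\|B_{t'}^H-B_{t'}^{H'}\|_{L^2}\le |H-H'|\,\sup_{h\in[a,b]}\|\partial_h B_{t'}^h\|_{L^2}$. The crucial input is then a time-weighted version of \eqref{moment_del_B}, i.e.\ an estimate $\sup_{h\in[a,b]}\mathbf{E}\big[|\partial_h B_{t'}^h|^2\big]\le C\,(1+t')^{2\beta}$ with polynomial growth in $t'$, which I would obtain by plugging the explicit Mandelbrot--van Ness kernel $K_H$ and its $H$-derivative into the Itô isometry and carefully bounding $\int_\R|\partial_H K_H(s,t')|^2\dd s$ and $\int_\R |K_H(s,t')|^2\dd s$.

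Combining the two parts by the triangle inequality in $L^2$ yields
\begin{align*}
 \big\|B_t^H-B_{t'}^{H'}\big\|_{L^2}\le C\,(1+t')^{\beta'}\big(\min\{1,|t-t'|^a\}+|H-H'|\big)
\end{align*}
for a suitable exponent $\beta'$, and raising this to the power $p$ (using Gaussianity) gives the moment bound needed for Kolmogorov's theorem on the two-dimensional parameter set. To pass to the infinite time horizon I would apply the continuity theorem blockwise on the parameter boxes $[a,b]\times[n,n+1]$, $n\in\N$, and patch the local H\"older constants together by a Borel--Cantelli / summability argument; the summable series over $n$ forces a small additional power of $(1+t')$ and a loss in the H\"older exponent, which is precisely where the arbitrary $\varepsilon\in(0,1)$, the exponent $1-\varepsilon$ and the weight $(1+t')^{2\varepsilon a+b}$ enter, and where one checks that the resulting constant $K_{a,b,\varepsilon,p}$ has a finite $p$-th moment. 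The main obstacle I anticipate is the sharp control of the time growth of $\mathbf{E}\big[|\partial_H B_t^H|^2\big]$: the kernel $\partial_H K_H(s,t)$ is singular as $s\uparrow t$ and decays only slowly as $s\to-\infty$, so obtaining the correct growth exponent in $(1+t')$---and then balancing it against the loss incurred in the weighted chaining so as to land exactly on $2\varepsilon a+b$---is the delicate quantitative step.
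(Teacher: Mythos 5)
Your proposal is correct and follows essentially the same route as the paper's source \cite{HRtheo}: mean-square estimates on the two-parameter Gaussian increments (stationarity of increments in $t$, an $L^2$ bound with polynomial time growth for the $H$-direction) combined with a multi-parameter Kolmogorov/GRR continuity argument, with the $(\cdot)^{1-\varepsilon}$ loss coming from taking $p$ large. The only cosmetic difference is that you absorb the polynomial weight by patching the blocks $[a,b]\times[n,n+1]$ via Borel--Cantelli, whereas the cited proof rescales once and for all to the process $\mathbb{B}_t^H=(1+t)^{-(b+\varepsilon)}B_t^H$ and applies a weighted multi-parameter GRR lemma directly.
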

The proof of this result relies on mean square estimates for the auxiliary  two-parameter process $\mathbb{B}$ defined as
\begin{align*}%
	\mathbb{B}_t^H = (1+t)^{-(b+\varepsilon)} \, B_t^H , \quad  t\geq 0, \, \, H\in [a,b] ,
\end{align*}
 Kolmogorov's continuity theorem and  a multi-parameter version of the Garsia--Rodemich--Rumsey (GRR) lemma, see Lemma 3.6 in \cite{HRtheo}.

\begin{remark} In fact, the authors use in \cite{HRtheo} the original representation from \cite{MandelbrotVanNess} with the constant $C^{\operatorname{MvN}}$. Since $[a,b] \ni H \mapsto \sin(\pi H) \Gamma(2H+1) \in (0,\infty)$ is Lipschitz continuous, this does not affect the above result. 
\end{remark}

\smallskip
\medskip

\section{The dependence of SDEs driven by fBm on the Hurst parameter}\label{sec:SDE}

Now we turn to the analysis of SDEs driven by fBm.

\subsection{SDEs  with additive noise}
For additive noise, i.e., $\sigma(x)= \Sigma $, $x \in \R^{d}$, with a given fixed matrix $\Sigma \in \mathbb{R}^{d \times m}$,  equation \eqref{det_int} is just an  ordinary differential equation. Indeed, setting $z_t=x_t-\Sigma g_t$, we have the dynamics
	\begin{align*}
	\dd z_t & = \mu (z_t + \Sigma g_t) \dd t, \quad t \geq 0, \qquad z_0 = x_0.
	\end{align*}
Since $g$ is continuous, for existence of a unique solution it is sufficient that $\mu$ is globally Lipschitz continuous due to the classical Picard--Lindel\"of Theorem (\cite{picard,lindeloef}).
Moreover, for the SDE
\begin{align*}
	X_t^H = x_0+  \int_0^t \mu(X_s^H) \dd s + \Sigma B_t^H, \quad t \geq 0,
\end{align*}
we immediately obtain the following result. (Recall that for $B^H$ we are using the MvN-fBm from Proposition \ref{MvN-Frechet}.)

\begin{lemma} Let  $[a,b] \subset (0,1)$, $T >0$ and let $ \mu \colon \R^d \rightarrow \R$ be globally Lipschitz continuous. Then, there exists a  non-negative random variable $K_{a,b,T}$ such that
	$$ \sup_{t \in [0,T]} \|X_t^{H_1}-X_t^{H_2}\| \leq K_{a,b,T} |H_1-H_2|, \qquad H_1,H_2 \in [a,b]. $$
	\end{lemma}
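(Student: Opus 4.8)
The plan is to reduce the SDE difference to a difference of the driving fBm paths plus a Grönwall-type argument, exploiting that the noise is additive so the equation is really an ODE. Using the substitution $z_t = X_t^H - \Sigma B_t^H$ from the paragraph preceding the statement, each $X^{H_i}$ solves $\dot z_t = \mu(z_t + \Sigma B_t^{H_i})$ with $z_0 = x_0$. Writing the two solutions in integral form and subtracting gives, for $i=1,2$,
\begin{align*}
	X_t^{H_1} - X_t^{H_2} = \int_0^t \big( \mu(X_s^{H_1}) - \mu(X_s^{H_2}) \big) \dd s + \Sigma \big( B_t^{H_1} - B_t^{H_2} \big).
\end{align*}
Taking norms, using the global Lipschitz bound $\|\mu(x)-\mu(y)\| \le L\|x-y\|$, and bounding the stochastic term by $\|\Sigma\| \sup_{t \in [0,T]} \|B_t^{H_1}-B_t^{H_2}\|$ yields an integral inequality for $u(t) := \|X_t^{H_1}-X_t^{H_2}\|$ of the form $u(t) \le \|\Sigma\|\, S + L \int_0^t u(s)\dd s$, where $S$ denotes the fBm path distance. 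An application of Grönwall's inequality then gives $\sup_{t\in[0,T]} u(t) \le \|\Sigma\|\, e^{LT} S$.

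\emph{The main step} is then to control $S = \sup_{t \in [0,T]} \|B_t^{H_1}-B_t^{H_2}\|$ linearly in $|H_1 - H_2|$ with a finite random constant, uniformly over $[a,b]$. Here I would invoke the Fréchet differentiability from Proposition~\ref{MvN-Frechet}: since for each $\omega$ the map $[a,b] \ni H \mapsto B^H(\omega) \in C([0,T];\R)$ is differentiable with derivative $\partial_H B^H$, the fundamental theorem of calculus in the Banach space $C([0,T];\R)$ gives
\begin{align*}
	\sup_{t\in[0,T]} |B_t^{H_1}-B_t^{H_2}| \le \Big( \sup_{H\in[a,b]} \sup_{t\in[0,T]} |\partial_H B_t^{H}| \Big)\, |H_1 - H_2|.
\end{align*}
Thus $S \le K'(\omega)\,|H_1-H_2|$ with $K'(\omega) := \sup_{H\in[a,b]} \sup_{t\in[0,T]} |\partial_H B_t^{H}(\omega)|$, and setting $K_{a,b,T} := \|\Sigma\|\, e^{LT} K'$ completes the argument (in the multidimensional case one applies this componentwise to each coordinate of $B^H$ and absorbs the dimensional constants into $K_{a,b,T}$).

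\emph{The point requiring care}—and what I expect to be the genuine obstacle—is verifying that $K'(\omega)$ is almost surely finite, so that $K_{a,b,T}$ is a bona fide non-negative random variable rather than merely an $L^p$-bounded quantity. The moment bound \eqref{moment_del_B} alone controls $\E[|\partial_H B_t^H|^p]$ pointwise in $(H,t)$ but does not immediately yield finiteness of the supremum over the compact set $[a,b]\times[0,T]$. This is precisely where Theorem~\ref{thm:SmoothnessRegularityFBM} is needed: part~(i) guarantees that $(H,t)\mapsto \partial_H B_t^H(\omega) = B_t^{H,1}(\omega)$ is continuous on the compact set $[a,b]\times[0,T]$ for every $\omega$, and a continuous function on a compact set attains a finite maximum, so $K'(\omega)<\infty$ surely. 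With the finiteness of $K'$ secured, the Grönwall estimate and the linear control of $S$ assemble into the claimed bound, uniformly over $H_1,H_2\in[a,b]$.
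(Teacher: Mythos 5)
Your proof is correct and follows essentially the same route as the paper: a Grönwall estimate reducing the problem to $\sup_{t\in[0,T]}\|B_t^{H_1}-B_t^{H_2}\|$, the mean value bound via the Fréchet derivative from Proposition \ref{MvN-Frechet}, and the finiteness of $\sup_{H\in[a,b]}\sup_{t\in[0,T]}\|\partial_H B_t^H\|$ from the joint continuity in Theorem \ref{thm:SmoothnessRegularityFBM}~(i). The point you flag as requiring care is exactly the one the paper addresses, and in the same way.
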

\begin{proof}
An application of Gronwall's lemma  gives that
$$ \sup_{t \in [0,T]} \|X_t^{H_1}-X_t^{H_2}\| \leq  \| \Sigma\| \,  \exp(K_{\mu} T) 	\sup_{t \in [0,T]} \|B_t^{H_1}-B_t^{H_2}\| $$
for all $T> 0$.  Here, $K_{\mu}$ is the Lipschitz constant of $\mu$. By Proposition \ref{MvN-Frechet} we have that
$$ \sup_{t \in [0,T]} \|B_t^{H_1}-B_t^{H_2}\| \leq |H_1-H_2| \sup_{H \in [a,b]} \sup_{t \in [0,T]} \| \partial_H B_t^{H}\|.$$
Note that  $\sup_{H \in [a,b]} \sup_{t \in [0,T]} \| \partial_H B_t^{H}\|$ is finite due to Theorem \ref{thm:SmoothnessRegularityFBM} (i). Thus, the assertion follows with
$$  K_{a,b,T}= \| \Sigma\| \,  \exp(K_{\mu} T) \sup_{H \in [a,b]} \sup_{t \in [0,T]} \| \partial_H B_t^{H}\|. $$
\end{proof}
Thus, SDEs with additive fractional noise given by the MvN-fBm are pathwise Lipschitz continuous (on a finite time interval) with respect to the Hurst parameter.	
The long-time dependence on $H$ of ergodic SDEs with additive noise is the main focus of  \cite{HRtheo}, which we will discuss in the next subsection.

	\smallskip

\subsection{Ergodic SDEs with additive noise} \label{subsec:ergodic}
Let us consider again the case of additive noise, but now with a dissipative drift. Thus, we consider the SDE
\begin{align}\label{diss_SDE}
	X_t^H = x_0+  \int_0^t \mu(X_s^H) \dd s +  B_t^H, \quad t \geq 0,
\end{align}
where $m=d$ and the  drift $\mu \in C^1(\R^d;\R^d)$ is  globally Lipschitz continuous and dissipative, i.e., there exist constants $K_{\mu},\kappa_{\mu}>0$ such that
\begin{align} \label{lip} \tag{D1}
	\| \mu(x)- \mu(y) \| & \leq K_{\mu} \|x-y\|, \qquad 	\,\, \,\, \, \, x,y  \in \R^d , \\
	\langle x -y, \mu(x)- \mu(y) \rangle  & \leq - \kappa_{\mu} \|x-y\|^2 , \qquad x,y  \in \R^d.  \tag{D2} \label{dissip}
\end{align}
Under these conditions the moments of $X^H_t$ are uniformly bounded for $t \geq 0$, i.e., we have
\begin{align} \label{diss_moment_bound} \sup_{t \geq 0} \mathbf{E} \big[ \|X_t^H\|^p \big] < \infty \end{align}
for all $p \geq 1$, see, e.g., Proposition 2.2 in \cite{NTsisp}.
Moreover, it is well-known, see, e.g., \cite{Hairer,GKN}, that  the solution of SDE \eqref{diss_SDE} converges to a stationary solution for $t \rightarrow \infty$ and  is ergodic.
For the following result see, e.g., Theorem 2.1 and Proposition 2.3 in \cite{NTsisp}.  
\begin{proposition}\label{stat_ergodic} Assume that $\mu  \in C^1(\R^d;\R^d)$ satisfies \eqref{lip} and \eqref{dissip}. Then, there exists a stochastic process $\overline{X}_t^H, t \geq 0$, such that 
	\begin{itemize}
		\item[(a)] we have  ${\overline{X}_t^H} \stackrel{\mathcal{L}}{=} {\overline{X}_0^H}$ for all $t \geq 0$ and  $\mathbf{E}\big[\| \overline{X}_0^H\|^p \big]<\infty$ for all $p \geq 1$,
		\item[(b)] we have   $$\lim_{t \rightarrow \infty}    \|{X}_t^H- \overline{X}_t^H\|\, \stackrel{a.s.}{=} \, 0,$$ 
		\item[(c)]  we have  $$\lim_{T \rightarrow \infty} \frac{1}{T} \int_0^T \varphi(X^H_t) \dd t  \, \stackrel{a.s.}{=} \, \mathbf{E} \big[ \varphi( \overline{X}^H_0) \big] $$  for all $\varphi \in C^{1}_{pol}(\mathbb{R}^d; \mathbb{R})$.
	\end{itemize}
	
\end{proposition}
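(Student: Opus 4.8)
The plan is to let the dissipativity condition \eqref{dissip} do the heavy lifting through a pathwise contraction estimate, and then to build the stationary solution by a pullback construction over the two-sided noise. The key observation is that the noise in \eqref{diss_SDE} is \emph{additive}: if $X$ and $Y$ both solve \eqref{diss_SDE} with the same driving $B^H$ but from different initial values, their difference $\Delta_t = X_t - Y_t$ solves the random ordinary differential equation $\d \Delta_t = \big(\mu(X_t)-\mu(Y_t)\big)\d t$, with no noise term. Hence, using \eqref{dissip},
\begin{align*}
\frac{\d}{\d t}\|\Delta_t\|^2 = 2\langle \Delta_t,\, \mu(X_t)-\mu(Y_t)\rangle \leq -2\kappa_\mu \|\Delta_t\|^2,
\end{align*}
and Gronwall's lemma gives the exponential contraction $\|X_t - Y_t\| \leq \|X_0 - Y_0\|\,e^{-\kappa_\mu t}$ for all $t \geq 0$. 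This single estimate is the engine behind all three assertions.

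For part (a) I would construct $\overline{X}^H$ as a pullback limit. Using the two-sided MvN-fBm from Section \ref{sec:MvN}, let $X^{H,(n)} = (X_t^{H,(n)})_{t \geq -n}$ denote the solution of \eqref{diss_SDE} started at time $-n$ from the fixed point $x_0$ and driven by the same noise. For $n' > n$ the solutions $X^{H,(n)}$ and $X^{H,(n')}$ share their driving noise on $[-n,\infty)$ but differ at time $-n$, so applying the contraction estimate from $-n$ yields
\begin{align*}
\|X_t^{H,(n')} - X_t^{H,(n)}\| \leq \big\|X_{-n}^{H,(n')} - x_0\big\|\, e^{-\kappa_\mu(t+n)}, \qquad t \geq -n.
\end{align*}
By stationarity of the increments of $B^H$, the law of $X_{-n}^{H,(n')}$ coincides with that of the solution started at time $0$ evaluated at $n'-n$, so \eqref{diss_moment_bound} bounds its $p$-th moment uniformly in $n,n'$. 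A Markov inequality together with Borel--Cantelli then shows that $\|X_{-n}^{H,(n')}\|\,e^{-\kappa_\mu n} \to 0$ almost surely, so that $(X^{H,(n)})_n$ is almost surely Cauchy, uniformly on compact time intervals; its limit $\overline{X}^H$ solves \eqref{diss_SDE} on all of $\R$ and inherits the moment bounds. Stationarity, i.e.\ $\overline{X}_t^H \stackrel{\mathcal{L}}{=} \overline{X}_0^H$, follows because the two-sided noise has stationary increments, so that shifting time by $h$ maps the pullback solution to one equal in law to the original. This proves (a), and part (b) is then immediate: applying the contraction estimate to the pair $(X^H,\overline{X}^H)$ from time $0$ gives $\|X_t^H - \overline{X}_t^H\| \leq \|x_0 - \overline{X}_0^H\|\, e^{-\kappa_\mu t} \to 0$ almost surely.

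For part (c) I would first prove the ergodic theorem for the stationary process and then transfer it to $X^H$ via (b). The stationary solution $\overline{X}^H$ is generated by a measure-preserving dynamical system on path space driven by the time-shifts of the underlying two-sided Brownian motion; invoking the ergodicity of this shift, as established in \cite{Hairer,GKN}, Birkhoff's ergodic theorem yields $\frac{1}{T}\int_0^T \varphi(\overline{X}_t^H)\,\d t \to \mathbf{E}[\varphi(\overline{X}_0^H)]$ almost surely. To replace $\overline{X}^H$ by $X^H$, note that for $\varphi \in C^1_{pol}(\R^d;\R)$ there are $q \geq 0$ and $C>0$ with $|\varphi(x)-\varphi(y)| \leq C(1+\|x\|^q+\|y\|^q)\|x-y\|$; combining this with the exponential decay from (b) and the almost sure at most polynomial-in-$t$ growth of $\|X_t^H\|$ and $\|\overline{X}_t^H\|$, the integrand $|\varphi(X_t^H)-\varphi(\overline{X}_t^H)|$ is almost surely integrable over $[0,\infty)$, whence $\frac{1}{T}\int_0^T |\varphi(X_t^H)-\varphi(\overline{X}_t^H)|\,\d t \to 0$ and the two Ces\`aro averages share the same limit.

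The main obstacle is the ergodicity input in part (c). The contraction and pullback steps are elementary consequences of dissipativity and additivity, but the ergodicity of the fBm-driven stationary system is genuinely delicate: since $B^H$ is neither a Markov process nor a semi-martingale for $H \neq \frac{1}{2}$, the classical generator-based criteria are unavailable, and one must rely instead on the mixing of the Gaussian shift established in the cited works. The remaining bookkeeping---the uniform-in-starting-time moment bounds and the polynomial growth estimates needed for the transfer---is routine but has to be carried out with care to guarantee that the Ces\`aro average of the approximation error truly vanishes.
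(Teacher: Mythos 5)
The paper does not actually prove this proposition: it is quoted with a pointer to Theorem 2.1 and Proposition 2.3 of \cite{NTsisp}, with the underlying constructions going back to \cite{Hairer,GKN}. Your argument is essentially the proof given in those references: the pathwise contraction $\|X_t-Y_t\|\leq \|X_0-Y_0\|e^{-\kappa_\mu t}$ from \eqref{dissip} and the additivity of the noise, the pullback construction over the two-sided MvN-fBm for (a), and Birkhoff's theorem for the stationary solution combined with the exponential forgetting for (b) and (c). Two places in your sketch deserve tightening. First, the Borel--Cantelli step for the Cauchy property as you state it runs over the double index $(n,n')$ and the relevant tail bounds do not depend on $n'$, so the double sum diverges; the clean fix is to telescope, i.e.\ to note that for fixed $T_0$ one has $\mathbf{E}\big[\sup_{t\geq T_0}\|X_t^{H,(n+1)}-X_t^{H,(n)}\|\big]\leq C e^{-\kappa_\mu (T_0+n)}$, which is summable in $n$ and gives almost sure uniform convergence on $[T_0,\infty)$ directly. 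Second, in (c) the almost sure polynomial-in-$t$ growth of $\|X_t^H\|$ and $\|\overline{X}_t^H\|$ is asserted but not derived; it follows from the uniform moment bounds \eqref{diss_moment_bound} and part (a) via Markov's inequality and Borel--Cantelli applied to $\sup_{n\leq t\leq n+1}\|\overline{X}_t^H\|$, and it is genuinely needed to make the Ces\`aro average of $|\varphi(X_t^H)-\varphi(\overline{X}_t^H)|$ vanish. Finally, a small simplification: for Birkhoff's theorem you do not need the full ergodic theory of \cite{Hairer}; it suffices that $\overline{X}_0^H$ is a measurable functional of the increments of the two-sided driving Wiener process (which your pullback construction delivers) together with the classical ergodicity of the Wiener shift. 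With these points addressed your proof is complete and matches the route taken in the cited literature.
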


In view of this result it is  natural  to analyse the long-time behaviour of the solution $X^H$ and as well of the stationary solution $\overline{X}^H$ with respect to $H$. For the scalar linear SDE
\begin{align*}
	U_t^H = x_0 - \kappa  \int_0^t U_s^H \dd s +  B_t^H, \quad t \geq 0,
\end{align*}
with $\kappa >0$ its  solution is called fractional Ornstein--Uhlenbeck process and the stationary solution is given by
\begin{align*}
	\overline{U}_t^H =       \int_{-\infty}^t  \exp(-\kappa(t-s)) \dd {B}^{H}_{s}, \qquad t \in \R.
\end{align*}
For the stationary fractional Ornstein--Uhlenbeck process one has the following result, see Proposition 4.2 in \cite{HRtheo}: \begin{proposition}\label{thm:regularity-OU}
	Let $0<a<b<1$ and $\kappa=1$. For any $\varepsilon \in (0,1)$ and $p\ge1$, there exists a non-negative random variable $K_{a,b,\varepsilon,p}$ with  $\mathbf{E}[|K_{a,b,\varepsilon,p}|^p]<\infty$ such that we have
	\begin{align*}
		|\overline{U}_t^{H} - \overline{U}_{t'}^{H'}| \leq K_{a,b,\varepsilon,p} \, (1+t')^{\varepsilon} \left( \min \{1 , |t'-t|^{a} \} +|H-H'|\right)^{1-\varepsilon}, \qquad t' \ge t \ge 0, \, \, H,H' \in [a,b],
	\end{align*}
	almost surely.
\end{proposition}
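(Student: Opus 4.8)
The plan is to mimic the strategy used for Theorem~\ref{thm:whole-regularity}, exploiting the Gaussianity of the stationary fractional Ornstein--Uhlenbeck process. For each fixed $(t,H)$ the random variable $\overline{U}_t^H = \int_{-\infty}^t e^{-(t-s)}\dd B^H_s$ is a Wiener integral against the \emph{same} underlying two-sided Brownian motion $B$ (compose the Mandelbrot--van Ness kernel with the exponential Ornstein--Uhlenbeck weight), so the whole field $(t,H)\mapsto \overline{U}_t^H$ is jointly Gaussian. Consequently, the increment $\overline{U}_t^H - \overline{U}_{t'}^{H'}$ is centred Gaussian, and by the equivalence of moments of Gaussian random variables there is, for every $p\geq 1$, a constant $c_p$ with
$$\mathbf{E}\big[|\overline{U}_t^H - \overline{U}_{t'}^{H'}|^p\big] = c_p\,\big(\mathbf{E}\big[|\overline{U}_t^H - \overline{U}_{t'}^{H'}|^2\big]\big)^{p/2}.$$
Thus the entire problem reduces to a single mean-square increment estimate for the two-parameter field, after which all $L^p$-bounds come for free.

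First I would establish the mean-square bound by splitting the increment into a pure time increment $\overline{U}_t^H - \overline{U}_{t'}^H$ and a pure Hurst increment $\overline{U}_{t'}^H - \overline{U}_{t'}^{H'}$. For the time increment I would use the stationarity of $\overline{U}^H$ together with its local fBm-type behaviour: the variance of $\overline{U}_t^H - \overline{U}_{t'}^H$ depends only on $|t-t'|$, is uniformly bounded in $t,t'$ (this yields the truncation $\min\{1,\cdot\}$ for $|t-t'|\geq 1$) and behaves like $|t-t'|^{2H}\leq |t-t'|^{2a}$ for $|t-t'|\leq 1$, uniformly over $H\in[a,b]$; hence $\mathbf{E}[|\overline{U}_t^H - \overline{U}_{t'}^H|^2]\lesssim \min\{1,|t-t'|^{2a}\}$. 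For the Hurst increment I would use that $H\mapsto\overline{U}_{t'}^H$ is differentiable, inheriting the $H$-smoothness of the Mandelbrot--van Ness representation from Theorem~\ref{thm:SmoothnessRegularityFBM}, write $\overline{U}_{t'}^H - \overline{U}_{t'}^{H'} = \int_{H'}^H \partial_h \overline{U}_{t'}^h\,\dd h$, and bound $\mathbf{E}[|\partial_h \overline{U}_{t'}^h|^2]$ in the spirit of \eqref{moment_del_B}. Here, unlike for the time increment, a growth in $t'$ must be allowed: since $\partial_h K_h$ carries logarithmic factors $\log|t'-s|$ and $\partial_h\overline{U}^h$ need not be time-stationary, the contribution of the far past $s<0$ forces a bound of the type $\mathbf{E}[|\overline{U}_{t'}^H - \overline{U}_{t'}^{H'}|^2]\lesssim (1+t')^{\alpha}|H-H'|^2$ with a (mild) exponent $\alpha$.

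Then I would upgrade these moment bounds to an almost-sure modulus of continuity via Kolmogorov's continuity theorem together with the multi-parameter Garsia--Rodemich--Rumsey lemma (Lemma~3.6 in \cite{HRtheo}), applied on the index set $(t,H)$ with respect to the natural metric-like quantity $\min\{1,|t'-t|^a\}+|H-H'|$, producing a random constant with finite $p$-th moments. Because the time domain $[0,\infty)$ is unbounded and a stationary Gaussian field is not almost surely globally bounded, the chaining cannot be run directly with a uniform constant; instead I would localize to the unit time blocks $[n,n+1]$, apply the lemma on each block $[n,n+1]\times[a,b]$, and sum the resulting estimates after inserting the weight $(1+t')^{\varepsilon}$. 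For any $\varepsilon>0$ this weight grows fast enough to absorb both the $t'$-growth of the Hurst increment and the essentially logarithmic growth of the block maxima, while the passage from $L^p$-moduli to an almost-sure bound costs the exponent $\varepsilon$ in the Hölder power; this explains simultaneously the factor $(1+t')^{\varepsilon}$ and the exponent $1-\varepsilon$ in the statement.

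The step I expect to be the main obstacle is the uniform-in-$t'$ control of the Hurst increment, i.e.\ the mean-square estimate for $\partial_h\overline{U}_{t'}^h$: one must show that the logarithmic singularities produced by differentiating $|t'-s|^{h-1/2}$ in $h$, coming from the far past $s<0$, remain controlled up to the admissible $(1+t')^{\varepsilon}$ weight as $t'\to\infty$, even though the exponential weight only damps the recent past. Stitching the time- and Hurst-increment bounds into a single joint modulus over the non-compact domain, with one integrable random constant valid for all $t'\geq t\geq 0$ and all $H,H'\in[a,b]$, is the other delicate point.
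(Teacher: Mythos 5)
Your proposal follows essentially the same route as the paper and the cited proof (Proposition 4.2 of \cite{HRtheo}): reduce everything to mean-square estimates for the jointly Gaussian two-parameter field via equivalence of Gaussian moments, split into a stationary time increment and a Hurst increment controlled through $\partial_H$, and upgrade to an almost-sure modulus by Kolmogorov's theorem combined with the multi-parameter Garsia--Rodemich--Rumsey lemma. The only cosmetic difference is that you handle the non-compact time axis by a unit-block decomposition and a weighted supremum, whereas the cited argument rescales the process by a factor of the form $(1+t)^{-\varepsilon}$ and applies the GRR lemma globally; both devices produce the $(1+t')^{\varepsilon}$ weight and the loss of $\varepsilon$ in the H\"older exponent.
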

The proof of this result is similar to Theorem \ref{thm:whole-regularity} and relies again on a rescaling of the fractional Ornstein--Uhlenbeck process, analysing the increments of the arising two-parameter process and an application of a multi-dimensional GRR lemma.
By comparing the solution of the general SDE \eqref{diss_SDE} with the stationary fractional Ornstein--Uhlenbeck process the following result (see Theorem 4.3 in \cite{HRtheo}) is obtained:

\begin{theorem}\label{thm:regularity-SDE}
		Let $0<a<b<1$. Assume that $\mu  \in C^1(\R^d;\R^d)$ satisfies \eqref{lip} and \eqref{dissip}.
\begin{itemize}
		\item[(i)]   For any $\varepsilon \in (0,1)$ and $p\ge1$, there exists a non-negative random variable $K_{a,b,\varepsilon,p}$ with  $\mathbf{E}[|K_{a,b,\varepsilon,p}|^p]<\infty$ such that we have
		\begin{align*}
			\|X_t^{H} - X_{t}^{H'}\| \leq K_{a,b,\varepsilon,p}\, (1+t)^{\varepsilon}\, |H-H'|^{1-\varepsilon}, \qquad  t \ge 0, \, \, H,H' \in [a,b],
			\end{align*}
			almost surely.
		\item[(ii)] Let $p \ge 1$. There exists a constant $C_{a,b,p}>0$ such that 
		\begin{align*}
		\mathbf{E} \big [ \big\|X_t^{H} - X_t^{H'}\big\|^p \big ]  \leq C_{a,b,p}\, |H-H'|^{p} , \qquad   t \ge 0, \,\,  H,H' \in [a,b].
		\end{align*}
	\end{itemize}
\end{theorem}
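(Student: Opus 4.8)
The plan is to leverage the dissipativity \eqref{dissip} together with the \emph{sharp} regularity of the stationary fractional Ornstein--Uhlenbeck process from Proposition \ref{thm:regularity-OU}, which grows only like $(1+t)^\varepsilon$, rather than invoking Theorem \ref{thm:whole-regularity} for $B^H-B^{H'}$ directly, which would incur the much worse factor $(1+t)^{2\varepsilon a+b}$. Let $\overline{U}^H$ denote the stationary fractional Ornstein--Uhlenbeck process of Proposition \ref{thm:regularity-OU} (rate $\kappa=1$), realized componentwise from the \emph{same} MvN-fBm $B^H$ that drives \eqref{diss_SDE}, and set $\Psi_t=\overline{U}_t^H-\overline{U}_t^{H'}$ and $D_t=X_t^H-X_t^{H'}$. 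The key observation is that, since $X^H$ and $\overline{U}^H$ are driven by the same noise, the difference
\begin{align*}
	\Phi_t := D_t-\Psi_t = \big(X_t^H-X_t^{H'}\big)-\big(\overline{U}_t^H-\overline{U}_t^{H'}\big)
\end{align*}
is absolutely continuous in $t$, as the increments of $B^H-B^{H'}$ cancel; explicitly $\tfrac{\d}{\d t}\Phi_t=\mu(X_t^H)-\mu(X_t^{H'})+\Psi_t$, with initial value $\Phi_0=-\Psi_0$ (using $X_0^H=X_0^{H'}=x_0$).

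For part (i) I would first differentiate $\|\Phi_t\|^2$ and insert $D_t=\Phi_t+\Psi_t$. Applying the dissipativity \eqref{dissip} in the form $\langle D_t,\mu(X_t^H)-\mu(X_t^{H'})\rangle\le-\kappa_{\mu}\|D_t\|^2$, the Lipschitz bound \eqref{lip}, and Young's inequality to absorb the cross terms, I expect a differential inequality
\begin{align*}
	\frac{\d}{\d t}\|\Phi_t\|^2 \le -c_1\|\Phi_t\|^2+c_2\|\Psi_t\|^2
\end{align*}
with $c_1,c_2>0$ depending only on $\kappa_{\mu},K_{\mu}$. The variation-of-constants estimate then gives
\begin{align*}
	\|\Phi_t\|^2 \le e^{-c_1 t}\|\Psi_0\|^2+c_2\int_0^t e^{-c_1(t-s)}\|\Psi_s\|^2\,\d s ,
\end{align*}
and here the exponential kernel is decisive: substituting $\|\Psi_s\|\le K_{a,b,\varepsilon,p}(1+s)^\varepsilon|H-H'|^{1-\varepsilon}$ from Proposition \ref{thm:regularity-OU} and using $\int_0^t e^{-c_1(t-s)}(1+s)^{2\varepsilon}\,\d s\le C(1+t)^{2\varepsilon}$ yields $\|\Phi_t\|\le C(1+t)^\varepsilon|H-H'|^{1-\varepsilon}$ almost surely. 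Part (i) follows from $\|D_t\|\le\|\Phi_t\|+\|\Psi_t\|$, absorbing all random factors into a single variable $K_{a,b,\varepsilon,p}$ with finite $p$-th moment.

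For part (ii) the pathwise bound is insufficient, since raising it to the $p$-th power only returns the growing, non-Lipschitz rate $(1+t)^{p\varepsilon}|H-H'|^{p(1-\varepsilon)}$. Instead I would run the same dissipativity argument at the level of moments: the chain rule for $\|\Phi_t\|^p$ together with the above differential inequality and Young's inequality gives $\tfrac{\d}{\d t}\|\Phi_t\|^p\le -c_1'\|\Phi_t\|^p+c_2'\|\Psi_t\|^p$, whence
\begin{align*}
	\mathbf{E}\big[\|\Phi_t\|^p\big]\le e^{-c_1' t}\,\mathbf{E}\big[\|\Psi_0\|^p\big]+c_2'\int_0^t e^{-c_1'(t-s)}\,\mathbf{E}\big[\|\Psi_s\|^p\big]\,\d s .
\end{align*}
The step that makes part (ii) genuinely Lipschitz is a uniform-in-time moment bound $\sup_{t\ge0}\mathbf{E}[\|\Psi_t\|^p]\le C|H-H'|^p$ for the Ornstein--Uhlenbeck difference. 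I would obtain this from the Fr\'echet representation $\Psi_t=\int_{H'}^H\partial_h\overline{U}_t^h\,\d h$, where $\partial_h\overline{U}_t^h=\int_{-\infty}^t e^{-(t-s)}\,\d(\partial_h B_s^h)$ is, for each $t$, a centered Gaussian variable whose variance is constant in $t$ by stationarity and finite by the moment bounds \eqref{moment_del_B} on $\partial_H B^H$. Feeding this uniform bound into the moment estimate, together with $\mathbf{E}[\|\Phi_0\|^p]=\mathbf{E}[\|\Psi_0\|^p]$, gives $\sup_{t\ge0}\mathbf{E}[\|\Phi_t\|^p]\le C|H-H'|^p$, and finally $\mathbf{E}[\|D_t\|^p]\le 2^{p-1}\big(\mathbf{E}[\|\Phi_t\|^p]+\mathbf{E}[\|\Psi_t\|^p]\big)\le C_{a,b,p}|H-H'|^p$.

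I expect the main obstacle to be securing the sharp $(1+t)^\varepsilon$ growth instead of the cruder rate inherited from $B^H$ itself: this is precisely what forces the comparison with the stationary Ornstein--Uhlenbeck process, and the exponential damping supplied by dissipativity is what converts its good growth into the claimed rate through the convolution estimate. For part (ii) the delicate point is the uniform-in-time Lipschitz moment control of $\Psi_t$; here the stationarity of the derivative process $\partial_h\overline{U}^h$ is essential, since a naive use of the pathwise bound of Proposition \ref{thm:regularity-OU} would lose both the uniformity in $t$ and the full Lipschitz exponent.
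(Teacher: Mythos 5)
Your argument is correct and follows essentially the route the paper indicates for this theorem: comparing $X^H-X^{H'}$ with the stationary fractional Ornstein--Uhlenbeck difference driven by the same noise, so that the absolutely continuous remainder $\Phi$ can be controlled by dissipativity and the exponential kernel converts the $(1+t)^{\varepsilon}$ growth of Proposition \ref{thm:regularity-OU} into the stated rates, with the uniform-in-time Lipschitz moment bound for $\overline{U}^H-\overline{U}^{H'}$ supplying part (ii). This is the comparison argument of Theorem 4.3 in \cite{HRtheo} that the survey cites, so no further comment is needed.
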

Exploiting that the variance of  the ergodic means
decreases over time  one can obtain also the following result (Theorem 4.6 in \cite{HRtheo}):
\begin{theorem}\label{th:ergodic-OU} 	 Assume that $\mu  \in C^1(\R^d;\R^d)$ satisfies \eqref{lip} and \eqref{dissip}. Let $0<a<b<1$,  $ \beta \in  (0,1)$ and $p \ge 1$. Then, there exists a non-negative random variable $K_{a,b,\beta,p}$ with  $\mathbf{E}[|K_{a,b,\beta,p}|^p]<\infty$ such that we have 
	\begin{align*}%
		\frac{1}{1+t} \int_0^{1+t} \| X_s^{H}-X_s^{H'}\|^2 \dd s  \leq K_{a,b,\beta,p} \,\,   |H-H'|^{\beta},  \qquad t \ge 0, \, \, H,H' \in [a,b],
	\end{align*} almost surely.
\end{theorem}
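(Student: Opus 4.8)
The plan is to remove the temporal growth in Theorem \ref{thm:regularity-SDE}(i) by exploiting that time averaging, combined with the ergodicity from Proposition \ref{stat_ergodic}, dampens the fluctuations of $\|X_s^H - X_s^{H'}\|^2$. First I would compare the solution with the stationary solution $\overline{X}^H$ from Proposition \ref{stat_ergodic}. Using $\big|\,\|a\|^2 - \|b\|^2\,\big| \le \|a-b\|\,(\|a\| + \|b\|)$ with $a = X_s^H - X_s^{H'}$ and $b = \overline{X}_s^H - \overline{X}_s^{H'}$, together with $\frac{1}{1+t}\int_0^{1+t}(\cdot)\dd s \le \int_0^\infty(\cdot)\dd s$ for nonnegative integrands, the difference between $\frac{1}{1+t}\int_0^{1+t}\|X_s^H - X_s^{H'}\|^2\dd s$ and its stationary counterpart is bounded by
$$ \int_0^\infty \big(\|X_s^H - \overline{X}_s^H\| + \|X_s^{H'} - \overline{X}_s^{H'}\|\big)\big(\|X_s^H - X_s^{H'}\| + \|\overline{X}_s^H - \overline{X}_s^{H'}\|\big)\dd s. $$
The first factor decays exponentially in $s$, uniformly in $H \in [a,b]$: by \eqref{dissip} the additive noise cancels in $X_s^H - \overline{X}_s^H$, giving $\|X_s^H - \overline{X}_s^H\| \le e^{-\kappa_\mu s}\|x_0 - \overline{X}_0^H\|$ with a prefactor of finite moments. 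The second factor is at most $K_{a,b,\varepsilon,p}(1+s)^\varepsilon|H-H'|^{1-\varepsilon}$ by Theorem \ref{thm:regularity-SDE}(i). Since $\beta < 1$, choosing $\varepsilon < 1 - \beta$ and using $|H-H'|^{1-\varepsilon} \le (b-a)^{1-\varepsilon-\beta}|H-H'|^{\beta}$, this transient error is bounded by a single random variable with finite moments of all orders times $|H-H'|^\beta$, uniformly in $t$, $H$ and $H'$.

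It then suffices to treat the stationary ergodic means. Here the key device is a maximal ergodic inequality. Because $\overline{X}^H$ and $\overline{X}^{H'}$ are stationary functionals of the same driving noise (both fBms arise from one two-sided Brownian motion through the Mandelbrot--van Ness representation of Section \ref{sec:MvN}), the map $s \mapsto \|\overline{X}_s^H - \overline{X}_s^{H'}\|^2$ has the form $g \circ \theta_s$ for a measure-preserving ergodic flow $(\theta_s)$. To obtain uniformity in $(H,H')$ at once, I would pull the supremum inside the time integral and set
$$ R_s := \sup_{H \ne H' \in [a,b]} \frac{\|\overline{X}_s^H - \overline{X}_s^{H'}\|^2}{|H-H'|^\beta}, $$
which is again stationary, $R_s = R_0 \circ \theta_s$. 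Since $\frac{1}{1+t}\int_0^{1+t} \le \sup_{T>0}\frac{1}{T}\int_0^T$, dividing by $|H-H'|^\beta$ gives
$$ \sup_{t \ge 0}\frac{1}{1+t}\int_0^{1+t}\frac{\|\overline{X}_s^H - \overline{X}_s^{H'}\|^2}{|H-H'|^\beta}\dd s \le \sup_{T > 0}\frac{1}{T}\int_0^T R_s \dd s, $$
and Wiener's maximal ergodic theorem yields $\big\|\sup_{T>0}\frac{1}{T}\int_0^T R_s\dd s\big\|_{L^r} \le C_r\|R_0\|_{L^r}$ for every $r > 1$.

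Finiteness of $\|R_0\|_{L^r}$ for all $r$ is a Kolmogorov--Chentsov statement for $[a,b] \ni H \mapsto \overline{X}_0^H$. The stationary analogue of Theorem \ref{thm:regularity-SDE}(ii) gives $\mathbf{E}\big[\|\overline{X}_0^H - \overline{X}_0^{H'}\|^q\big] \le C_q |H-H'|^q$ for all $q$, so, $\beta/2$ being $< 1$, the Kolmogorov continuity theorem furnishes an almost surely finite $\tfrac{\beta}{2}$-Hölder constant of $H \mapsto \overline{X}_0^H$ belonging to every $L^q$; this constant dominates $R_0^{1/2}$. Combining the transient and stationary parts and choosing $r$ large completes the construction of $K_{a,b,\beta,p}$ with $\mathbf{E}[|K_{a,b,\beta,p}|^p] < \infty$.

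The main obstacle is exactly the passage from uniform-in-$t$ moment control of the ergodic means, which follows routinely from Theorem \ref{thm:regularity-SDE}(ii) and Jensen's inequality, to an almost sure bound that does not deteriorate as $t \to \infty$; marginal moment bounds at individual times cannot control the supremum over $t$ on their own, since they carry no decay in $t$. This forces one to use the averaging structure, and the substantial inputs are that the stationary pair $(\overline{X}^H, \overline{X}^{H'})$ is a genuinely stationary, ergodic system amenable to a maximal ergodic inequality, and the uniform-in-$H$ exponential convergence $X_s^H \to \overline{X}_s^H$ underlying the transient reduction. Both rest on the ergodic theory of dissipative fractional SDEs and on \eqref{dissip}, rather than on soft estimates.
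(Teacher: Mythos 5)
Your overall architecture (reduce to the stationary regime by exponential contraction, then control the stationary ergodic means by a maximal ergodic inequality plus Kolmogorov continuity in $H$) is a genuinely different route from the one the paper points to: the survey gives no proof but attributes the result to Theorem 4.6 of \cite{HRtheo}, where the argument runs through quantitative decay of the variance of the ergodic means combined with a multi-parameter Garsia--Rodemich--Rumsey/Kolmogorov argument over the field $(t,H,H')$, i.e.\ no stationary comparison and no ergodic maximal theorem. Your route is attractive in that it isolates the probabilistic input (Wiener's dominated ergodic theorem applied to $R_s=R_0\circ\theta_s$), but it has a concrete gap in the transient reduction. You bound the second factor $\|X_s^H-X_s^{H'}\|+\|\overline{X}_s^H-\overline{X}_s^{H'}\|$ by $K(1+s)^\varepsilon|H-H'|^{1-\varepsilon}$ ``by Theorem \ref{thm:regularity-SDE}(i)'', but that theorem only controls $\|X_s^H-X_s^{H'}\|$, not the stationary difference. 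If you try to recover $\|\overline{X}_s^H-\overline{X}_s^{H'}\|$ from the stated results via the triangle inequality, you get $\|X_s^H-X_s^{H'}\|+2e^{-\kappa_\mu s}\sup_{H}\|x_0-\overline{X}_0^H\|$, and the second summand carries no factor of $|H-H'|$; the resulting term $\int_0^\infty e^{-2\kappa_\mu s}\,(\sup_H\|x_0-\overline{X}_0^H\|)^2\dd s$ in your product bound is an $O(1)$ random variable that does \emph{not} vanish as $H'\to H$, so the transient error is not $O(|H-H'|^\beta)$. What is actually needed is a pathwise H\"older-in-$H$ estimate for the \emph{stationary} solution, $\|\overline{X}_s^H-\overline{X}_s^{H'}\|\le K(1+s)^\varepsilon|H-H'|^{1-\varepsilon}$ for all $s\ge0$ simultaneously. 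This is true (it is the analogue of Proposition \ref{thm:regularity-OU} transported to $\overline{X}$ by the same comparison that yields Theorem \ref{thm:regularity-SDE}), but it is not among the results you are allowed to quote, and it cannot be derived from them by soft arguments because the exponential contraction gives nothing at small $s$. The same object is what you would need to make your Kolmogorov--Chentsov step for $R_0$ and the identification $R_s=R_0\circ\theta_s$ rigorous for a single version of the field $(s,H)\mapsto\overline{X}_s^H$.

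Two further ingredients you assert without proof are real but manageable: the moment bound $\mathbf{E}\big[\|\overline{X}_0^H-\overline{X}_0^{H'}\|^q\big]\le C_q|H-H'|^q$ follows from Theorem \ref{thm:regularity-SDE}(ii) by letting $t\to\infty$ in $\mathbf{E}\|\overline{X}_t^H-\overline{X}_t^{H'}\|^q\le 3^{q-1}\big(\mathbf{E}\|\overline{X}_t^H-X_t^H\|^q+\mathbf{E}\|X_t^H-X_t^{H'}\|^q+\mathbf{E}\|X_t^{H'}-\overline{X}_t^{H'}\|^q\big)$ together with the exponential contraction; and the joint stationarity of $(\overline{X}^H)_{H\in[a,b]}$ under the increment shift of the two-sided driving Brownian motion is exactly what the Mandelbrot--van Ness construction of Section \ref{sec:MvN} provides. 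If you supply the stationary H\"older-in-$H$ estimate, your proof closes and is arguably more conceptual than the cited one; as written, the transient step is where it breaks.
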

This result is of  importance for statistical problems involving  SDEs with additive fractional noise, namely for analysing  estimators which simultaneously estimate $H$ and other parameters of the SDE, see \cite{HRstat}.

The law of the stationary fractional Ornstein--Uhlenbeck process depends in a smooth way on $H$, since $\overline{U}^H$ is a Gaussian process which satisfies in particular 
$$ \mathbf{E} \big[  \overline{U}^H_0 \big] =0 \qquad \textrm{and} \qquad   \mathbf{E} \big[ \big|  \overline{U}^H_0 \big|^2 \big] =     \frac{ \Gamma(2H+1)}{2\kappa^{2H}}. $$
 See, e.g., the proof of Proposition 3.12 of \cite{Hairer} or Proposition 3.1 in \cite{Sch-diss}. An immediate consequence of Theorem \ref{thm:regularity-SDE} is that the law of the stationary solution $\overline{X}^H$ is also Lipschitz continuous with respect to $H$:

\begin{proposition}
	Let $0<a<b<1$. Assume that $\mu \in C^1(\R^d;\R^d)$ satisfies \eqref{lip} and \eqref{dissip}. Then, for 
any $\varphi \in C_{\textrm{pol}}^1(\R^d;\R)$  the map
$$ \mathcal{E}_{\varphi,a,b} \colon [a,b] \rightarrow \R, \qquad \mathcal{E}_{\varphi,a,b}(H)= \mathbf{E} \big[ \varphi(\overline{X}_0^H) \big],$$
is globally Lipschitz continuous.
\end{proposition}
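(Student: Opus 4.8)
The plan is to transfer the problem from the stationary solution $\overline{X}^H$, for which we have no direct $H$-regularity estimate, back to the solution $X^H$ started at the fixed point $x_0$, where Theorem~\ref{thm:regularity-SDE}~(ii) applies, and then to let time tend to infinity. The bridge between the two is Proposition~\ref{stat_ergodic}: the almost sure convergence $X_t^H\to\overline{X}_t^H$ combined with stationarity.

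First I would establish that, for each fixed $H\in[a,b]$,
$$ \mathbf{E}\big[\varphi(\overline{X}_0^H)\big] = \lim_{t\to\infty}\mathbf{E}\big[\varphi(X_t^H)\big]. $$
Indeed, by stationarity $\overline{X}_t^H\stackrel{\mathcal{L}}{=}\overline{X}_0^H$ for all $t\ge0$, while Proposition~\ref{stat_ergodic}~(b) gives $\|X_t^H-\overline{X}_t^H\|\to0$ almost surely; writing $X_t^H=\overline{X}_t^H+(X_t^H-\overline{X}_t^H)$ and using a Slutsky-type argument therefore yields $X_t^H\stackrel{\mathcal{L}}{\rightarrow}\overline{X}_0^H$ as $t\to\infty$. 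Since $\varphi\in C^1_{pol}(\R^d;\R)$ obeys $|\varphi(x)|\le c_\varphi(1+\|x\|^{q})$ for suitable $c_\varphi,q$, and since $\sup_{t\ge0}\mathbf{E}[\|X_t^H\|^{r}]<\infty$ for every $r\ge1$ by \eqref{diss_moment_bound}, the family $\{\varphi(X_t^H)\}_{t\ge0}$ is uniformly integrable, so convergence in law upgrades to convergence of the expectations, which is the claimed identity.

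Next I would reduce the assertion to a uniform-in-time estimate for $X^H$. From the displayed identity,
$$ \big|\mathcal{E}_{\varphi,a,b}(H)-\mathcal{E}_{\varphi,a,b}(H')\big| \le \sup_{t\ge0}\mathbf{E}\big[\big|\varphi(X_t^H)-\varphi(X_t^{H'})\big|\big], \qquad H,H'\in[a,b]. $$
To bound the right-hand side I would use the mean value theorem together with the polynomial growth of $\nabla\varphi$, giving $\big|\varphi(x)-\varphi(y)\big|\le c_\varphi\big(1+\|x\|^{q}+\|y\|^{q}\big)\,\|x-y\|$, followed by H\"older's inequality with a conjugate pair $(p,p')$. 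The first factor $\mathbf{E}[(1+\|X_t^H\|^{q}+\|X_t^{H'}\|^{q})^{p'}]^{1/p'}$ is bounded uniformly in $t\ge0$ and in $H,H'\in[a,b]$ by the uniform moment bound \eqref{diss_moment_bound}, while the second factor $\mathbf{E}[\|X_t^H-X_t^{H'}\|^{p}]^{1/p}$ is at most $C_{a,b,p}^{1/p}\,|H-H'|$ by Theorem~\ref{thm:regularity-SDE}~(ii). Combining these gives the Lipschitz estimate with a constant depending only on $a,b,\varphi$.

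The main obstacle, and the step requiring care, is the uniform integrability in the first part: passing from the almost sure convergence in Proposition~\ref{stat_ergodic}~(b) to convergence of the expectations $\mathbf{E}[\varphi(X_t^H)]$ for the unbounded, polynomially growing $\varphi$. This is precisely where the uniform-in-time moment bound \eqref{diss_moment_bound} is essential. A secondary point is that one needs these moments, and hence the constant in the H\"older estimate, to be uniform over $H\in[a,b]$; this follows because the dissipativity and Lipschitz constants $\kappa_\mu,K_\mu$ are independent of $H$ and the moments of $B^H$ are uniformly controlled on $[a,b]$ by Theorem~\ref{thm:SmoothnessRegularityFBM}. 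Everything else is a routine combination of H\"older's inequality with the already-established estimate of Theorem~\ref{thm:regularity-SDE}~(ii).
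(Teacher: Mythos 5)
Your argument is correct, but it reaches $\mathbf{E}\big[\varphi(\overline{X}_0^H)\big]$ by a different bridge than the paper. You use parts (a) and (b) of Proposition~\ref{stat_ergodic}: stationarity together with the almost sure convergence $\|X_t^H-\overline{X}_t^H\|\to 0$ gives $X_t^H\stackrel{\mathcal{L}}{\rightarrow}\overline{X}_0^H$, and the uniform-in-time moment bound \eqref{diss_moment_bound} upgrades this, via uniform integrability, to $\mathbf{E}\big[\varphi(\overline{X}_0^H)\big]=\lim_{t\to\infty}\mathbf{E}\big[\varphi(X_t^H)\big]$, after which you compare the two flows at a common time $t$. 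The paper instead invokes part (c), the ergodic theorem, writing the difference of the two stationary expectations as the expectation of a $\liminf$ of time averages and then applying Fatou's lemma; this sidesteps the weak-convergence and uniform-integrability step at the cost of carrying an extra time integral through the estimates. From that point on the two arguments coincide: mean value theorem with the polynomial growth of $\varphi'$, Cauchy--Schwarz, the uniform moment bounds, and Theorem~\ref{thm:regularity-SDE}~(ii). One correction to your closing remark: the uniformity over $H\in[a,b]$ of $\sup_{t\ge 0}\mathbf{E}\big[\|X_t^H\|^p\big]$ does not follow from Theorem~\ref{thm:SmoothnessRegularityFBM}, which only controls the paths on a compact time interval and says nothing about moments uniform in $t\ge 0$; the paper obtains it, as in \eqref{diss_moment_bound_uniform}, by combining the fixed-$H$ bound \eqref{diss_moment_bound} with the $H$-Lipschitz moment estimate of Theorem~\ref{thm:regularity-SDE}~(ii) through the triangle inequality. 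With that substitution your proof is complete.
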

	
	\begin{proof} First note that   equation \eqref{diss_moment_bound}  and Theorem \ref{thm:regularity-SDE} (ii) imply the existence of a constant $C_{a,b,p}^*>0$ such that
		\begin{align}
			 \label{diss_moment_bound_uniform} \sup_{t \geq 0} \sup_{H \in [a,b]} \mathbf{E} \big [ \|X_t^H\|^p \big]\leq C_{a,b,p}^*
		\end{align}
		for all $p \geq 1$.  Property (c) of Proposition \ref{stat_ergodic} yields that
	\begin{align*}   \mathbf{E} \big[ \varphi( \overline{X}^{H_1}_0) \big] -  \mathbf{E} \big[ \varphi( \overline{X}^{H_2}_0) \big] \, & = \, \mathbf{E} \left[ \liminf_{T \rightarrow \infty} \frac{1}{T} \int_0^T  \big( \varphi(X^{H_1}_t) -  \varphi(X^{H_2}_t) \big)  \dd t \right]  
	  \end{align*}
	for $H_1,H_2 \in [a,b]$. Since $\varphi \in C_{{pol}}^1(\R^d;\R)$, there exists $p \geq 1$ and $K_{\varphi}>0$ such that
		$$ | \varphi(x)- \varphi(y) | \leq K_{\varphi} \left( 1+ \|x\|^p+\|y\|^p\right) \|x-y\|, \qquad x,y \in \R^d. $$	
		Thus,  we obtain
	$$   \left| \mathbf{E} \big[ \varphi( \overline{X}^{H_1}_0) \big] -  \mathbf{E} \big[ \varphi( \overline{X}^{H_2}_0) \big] \right| \leq  {K_{\varphi}}  \, \mathbf{E} \left[  \liminf_{T \rightarrow \infty} \frac{1}{T} \int_0^T ( 1+ \|X^{H_1}_t\|^p+\|X^{H_2}_t\|^p )  \| X^{H_1}_t -  X^{H_2}_t \|  \dd t  \right] $$ 
	and Fatou's lemma  implies 
		\begin{align*} & \left| \mathbf{E} \big[ \varphi( \overline{X}^{H_1}_0) \big] -  \mathbf{E} \big[ \varphi( \overline{X}^{H_2}_0) \big] \right| 
 \leq  K_{\varphi}    \liminf_{T \rightarrow \infty} \frac{1}{T} \int_0^T  \mathbf{E}  \left[  ( 1+ \|X^{H_1}_t\|^p+\|X^{H_2}_t\|^p )\| X^{H_1}_t -  X^{H_2}_t \|   \right] \dd t.  \end{align*}
		Using the Cauchy-Schwarz inequality and $(a+b+c)^2 \leq 4(a^2+b^2+c^2)$, we  obtain 
		\begin{align*} & 
\left| \mathbf{E} \big[ \varphi( \overline{X}^{H_1}_0) \big] -  \mathbf{E} \big[ \varphi( \overline{X}^{H_2}_0) \big] \right| \\ & \qquad  \qquad  \leq  2K_{\varphi}   \liminf_{T \rightarrow \infty} \frac{1}{T} \int_0^T  \left( \mathbf{E}  \left[  1+ \|X^{H_1}_t\|^{2p}+\|X^{H_2}_t\|^{2p} \right] \right)^{1/2} \left( \mathbf{E} \left[ \| X^{H_1}_t -  X^{H_2}_t \big\|^2   \right] \right)^{1/2}\dd t  \end{align*}
		and Theorem \ref{thm:regularity-SDE} (ii) then gives
			\begin{align*} &
\left| \mathbf{E} \big[ \varphi( \overline{X}^{H_1}_0) \big] -  \mathbf{E} \big[ \varphi( \overline{X}^{H_2}_0) \big] \right|  \\ & \qquad  \qquad  \leq  2 K_{\varphi} \sqrt{C_{a,b,2} } |H_1-H_2| \left(  \liminf_{T \rightarrow \infty} \frac{1}{T} \int_0^T  \left( \mathbf{E}  \left[  1+ \|X^{H_1}_t\|^{2p}+\|X^{H_2}_t\|^{2p} \right] \right)^{1/2} \dd t \right) .  \end{align*}
		By equation  \eqref{diss_moment_bound_uniform} we have
		$$  \liminf_{T \rightarrow \infty} \frac{1}{T} \int_0^T  \left( \mathbf{E}  \left[  1+ \|X^{H_1}_t\|^{2p}+\|X^{H_2}_t\|^{2p} \right] \right)^{1/2} \dd t  \leq \sqrt{1+2C^*_{a,b,2p}} $$
for all $H_1,H_2 \in [a,b]$ and   the assertion now follows. 
	\end{proof}

\smallskip
 
 \subsection{The Doss--Sussmann approach} For $m=1$, the so-called Doss--Sussmann approach from \cite{Doss,Sussmann} is a strikingly simple and elegant concept to give meaning to the object
 \begin{align}\label{suss_ode}
 	\dd x(t)  = \mu(x(t)) \dd t +  \sigma(x(t))\dd g_t, \quad  t \in [0,T], \qquad x(0)=x_0 \in \mathbb{R}^d,
 \end{align}	
 with   $g \in C([0,T];\mathbb{R})$.
 
 Namely, a function $\gamma  \in C([0,T];\mathbb{R}^d)$ is called a solution to this equation,
 \begin{itemize}
 	\item[(i)]
 	if there exists a continuous map
 	$\Gamma \colon C([0,T];\mathbb{R}) \rightarrow C([0,T];\mathbb{R}^d)$ such that, for every $v \in C^1([0,T];\mathbb{R})$, $\Gamma(v)$ is a classical solution of the ODE
 	$$ x'(t)= \mu(x(t))+\sigma(x(t))v'_t, \quad  t \in [0,T], \qquad x(0)=x_0, $$
 	\item[(ii)] and $\gamma=\Gamma(g)$.
 \end{itemize}	
 In particular, if $\mu$ and $\sigma$ are globally Lipschitz continuous, then the differential equation \eqref{suss_ode} has a unique solution in the above sense, see \cite{Doss,Sussmann}. 
 
 Consequently, we can view 
 $$ X^H(\omega)=\Gamma(B^H(\omega)), \qquad \omega \in \Omega, $$
 as the unique pathwise solution of 
 	\begin{align*}
 	X_t^H = x_0+  \int_0^t \mu(X^H_s) \dd s +  \int_0^t \sigma(X^H_s)\dd B^H_s, \qquad t \in [0,T].
 \end{align*}
 
For $d=m=1$, we have even a more explicit representation of $\Gamma$ under slightly stronger assumptions on the coefficients, see, e.g., \cite{Doss}. So, let  $\mu \in   C^1(\R;\R)$ and $\sigma \in C^2(\R;\R)$ such that $\mu'$ and $\sigma'$ are bounded.
 Further, let $h \colon \R \times \R \to \R$ be the unique solution of
 \begin{align}\label{eq:Diff_h}
 	\frac{\partial h}{\partial \beta}(\alpha,\beta) = \sigma(h(\alpha,\beta)), \qquad h(\alpha, 0) = \alpha,
 \end{align}
 and for a given $g \in C([0,T]; \mathbb{R})$, let $D\in C^1([0,T];\mathbb{R})$ be the solution of the ODE
 \begin{align*}
 	D'(t) = f(D(t),g_t),  \quad  t \in [0,T],\qquad D(0) = x_0,
 \end{align*}
 with $f :\R \times \mathbb{R} \rightarrow \mathbb{R}$  given by
 \begin{align*} 
 	f(x,y) = \exp\bigg(- \int_0^{y} \sigma'(h(x,s))\dd s\bigg) \mu \big(h(x,y)\big).
 \end{align*}
 Then, the unique Doss--Sussmann solution  to equation \eqref{suss_ode} can be written as
 \begin{align} \label{suss_gamma} \Gamma(g)(t)=h(D(t),g_t) \end{align} and the map $\Gamma$ is locally Lipschitz continuous. This smoothness result can  be extended to Fr\'echet differentiability:
 
 \begin{lemma}
Let $\mu \in   C^1(\R;\R)$ and $\sigma \in C^2(\R;\R)$ such that $\mu'$ and $\sigma'$ are bounded. Then, the map 
$ \Gamma \colon C([0,T]; \R) \rightarrow  C([0,T]; \R) $ defined by \eqref{suss_gamma}  is Fr\'echet differentiable.
 \end{lemma}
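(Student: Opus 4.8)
The plan is to exploit the explicit representation $\Gamma(g)(t)=h(D(t),g_t)$ and to track the two ways in which $g$ enters: directly through the second slot of $h$, and indirectly through the driver $D$, which itself solves the ODE $D'=f(D,g)$. Accordingly I would factor $\Gamma$ as the composition $\Gamma = N_h\circ\Psi$, where $\Psi(g)=(\Phi(g),g)=(D,g)\in C([0,T];\R^2)$ records the solution of the $D$-equation together with $g$ itself, and $N_h(u,v)(t)=h(u(t),v(t))$ is the superposition (Nemytskii) operator built from $h$. Since the chain rule holds for Fréchet derivatives, it suffices to prove that each factor is Fréchet differentiable; the derivative of $\Gamma$ will then be $\Gamma'(g)\eta\,(t)=\partial_\alpha h(D(t),g_t)\,V(t)+\sigma(h(D(t),g_t))\,\eta_t$, where $V=\Phi'(g)\eta$ and I have used $\partial_\beta h=\sigma\circ h$.

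First I would record the regularity of the building blocks. Because $\sigma\in C^2$ with $\sigma'$ bounded, the map $h$ is the globally defined flow of the autonomous $C^2$ vector field $\sigma$, hence $h\in C^2(\R^2;\R)$ jointly in $(\alpha,\beta)$ by the standard theorem on smooth dependence of ODE flows on time and initial data. Differentiating under the integral sign and using the fundamental theorem of calculus, together with $\sigma'\in C^1$, $\mu\in C^1$ and $h\in C^2$, then shows that $f\in C^1(\R^2;\R)$ with continuous partial derivatives $\partial_x f,\partial_y f$. These derivatives are therefore uniformly continuous on compact sets, which is exactly what the remainder estimates below require.

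The heart of the argument, and the step I expect to be the main obstacle, is the Fréchet differentiability of the solution map $\Phi\colon C([0,T];\R)\to C([0,T];\R)$, $\Phi(g)=D$. For a direction $\eta\in C([0,T];\R)$ I would introduce the candidate derivative $V$ as the solution of the linear variational equation
\begin{align*}
 V'(t)=\partial_x f(D(t),g_t)\,V(t)+\partial_y f(D(t),g_t)\,\eta_t,\qquad V(0)=0,
\end{align*}
which is well posed since its coefficients are continuous, and which depends linearly and boundedly on $\eta$ by variation of constants. Writing $D^\eta=\Phi(g+\eta)$ and $R=D^\eta-D-V$, a first Grönwall estimate gives $\|D^\eta-D\|_\infty=O(\|\eta\|_\infty)$, so that $(D^\eta,g+\eta)$ stays in a fixed compact set for $\|\eta\|_\infty$ small. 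On this compact set the first-order Taylor expansion of $f$ with uniformly continuous derivatives yields $f(D^\eta,g+\eta)-f(D,g)-\partial_x f(D,g)(D^\eta-D)-\partial_y f(D,g)\eta=o(\|\eta\|_\infty)$ uniformly in $t$; substituting this into the ODE satisfied by $R$ and applying Grönwall's lemma once more gives $\|R\|_\infty=o(\|\eta\|_\infty)$, which is precisely Fréchet differentiability of $\Phi$. The delicate point is to make the little-$o$ uniform over all directions on the unit sphere, which is where the uniform rather than merely pointwise continuity of $\partial f$ on compacts is essential.

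It remains to handle the superposition operator $N_h$. Since $h\in C^2\subset C^1$, for any fixed $w=(u,v)\in C([0,T];\R^2)$ the operator $N_h$ is Fréchet differentiable with $N_h'(w)\zeta\,(t)=\partial_\alpha h(u(t),v(t))\,\zeta^1(t)+\partial_\beta h(u(t),v(t))\,\zeta^2(t)$; the proof is the same Taylor-plus-uniform-continuity argument as above, but now without any Grönwall step, since $N_h$ acts pointwise in $t$. Combining the differentiability of $\Psi=(\Phi,\mathrm{id})$ from the previous paragraph with that of $N_h$ via the chain rule yields the Fréchet differentiability of $\Gamma=N_h\circ\Psi$ and the derivative formula stated above, completing the proof.
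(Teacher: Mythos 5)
Your proof is correct and takes essentially the same route as the paper: the paper's one-line proof delegates the Fr\'echet differentiability of the ODE solution map $g \mapsto D$ to Lemma 4.1 of \cite{KN19} and combines it with the smoothness of $h$, which is precisely your decomposition $\Gamma = N_h \circ (\Phi,\mathrm{id})$. You have simply written out in full the variational-equation/Gr\"onwall argument and the Nemytskii-operator step that the citation encapsulates.
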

 \begin{proof}
 	This follows from Lemma 4.1 in \cite{KN19} and the smoothness of $h$ as the solution of the partial differential equation \eqref{eq:Diff_h}.
 \end{proof}
 
Since $B^H$ is given by the Mandelbrot--van Ness fBm from Proposition \ref{MvN-Frechet}, we  obtain by the chain rule, the above lemma and Proposition \ref{MvN-Frechet} for any fixed $\omega \in \Omega$ and any  $t>0$  that the map
 $$ \mathcal{S}_{t,(0,1)}\colon \Omega \times (0,1) \rightarrow \R, \qquad \mathcal{S}_{t,(0,1)}(\omega,H)=X^H_t(\omega),$$
 is differentiable with respect to $H$.

 In several cases one can derive explicit representations for $Y^H=\partial_H X^H$.
 
 \begin{itemize}
\item[(i)]  For the linear equation
 $$ \dd X_t^H= \alpha X_t^H \dd t + \beta X_t^H \dd B^H_t$$ with $\alpha, \beta \in \mathbb{R}$, we have
 $$ X_t^H=x_0 \exp\left( \alpha t + \beta B_{t}^H \right)$$ and
 $$ Y_t^H = \beta  X_t^H \, {\partial_H}B_{t}^H.$$

 \item[(ii)]

 In the case of additive noise, i.e., $\sigma(x)=1$ for all $x \in \mathbb{R}$, the derivative satisfies
 $$ Y_t^H= \int_0^t \mu'(X_{\tau}^H) Y_{\tau}^H \dd \tau + \partial_H B_t^H, $$ 
 and therefore we have
 \begin{align*}
 	Y_t^H &= \int_0^t\exp\left( \int_s^t \mu'(X_{\tau}^H) \dd \tau \right) \dd ( {\partial_H}B_{s}^H),
 \end{align*} 
 using integration by parts and  $\partial_HB^H_0 = 0$. Note that the boundedness of $\mu'$, Gronwall's lemma and equation \eqref{moment_del_B} imply that
 \begin{align} \label{bound_mom_frechet_add_noise}
 \sup_{H \in [a,b]}	\sup_{t \in [0,T]}	\mathbf{E} \big [	|Y_t^H|^p \big ]< \infty
 \end{align} for all $T>0$, $p \geq 1$ and $[a,b] \subset (0,1)$.
 \item[(iii)]
 For non-additive noise, i.e., $\sigma' \neq 0$, one expects to obtain the representation
 $$ Y_t^H= \int_0^t \exp \left(\int_s^t \mu'(X_{\tau}^H) \dd \tau + \int_s^{t} \sigma'(X_{\tau}^H) \dd B_{\tau}^H \right) \sigma(X_s^H)  \dd   ( {\partial_H} B_s^H ).$$
 For $H>1/2$ this is indeed the case, see Subsection \ref{sec:Hge1/2}. However, for $H\leq 1/2$, a meaningful interpretation of even the simpler object
 $$ \int_0^T  B_s^H \dd   ( {\partial_H} B_s^H)$$
 is an open question, see  Subsection \ref{sec:Hleq1/2}.
 
 \end{itemize}
 	 Under additional assumptions one can establish the following result for the marginal distributions, see Proposition 4.1 in \cite{RichardTalay},  by using Malliavin techniques and a combined Lamperti-parabolic PDE transformation.

 \begin{proposition}\label{lip_exp_d=1}
 Let $1/4<a<b<1$,
 $\mu \in C^{1}_b(\R; \R)$, $\sigma \in C^2_b(\R ; \R)$ and $\inf_{x \in \R} \sigma(x) >0$. Moreover, let $\varphi \in C_b^{2}(\R; \R)$ with $\varphi'' \in C^{\lambda}(\R;\R)$ for some $\lambda > 0$.
 Then, for all $t \geq 0$, the maps   $$ \mathcal{E}_{\varphi,a,b,t}\colon [a,b] \rightarrow \R, \qquad \mathcal{E}_{\varphi,a,b,t}(H)= \mathbf{E} \big[ \varphi(X_t^H) \big],$$
 are  globally Lipschitz continuous.
  \end{proposition}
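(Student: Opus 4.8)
The plan is to remove the multiplicative noise by a Lamperti-type transformation, thereby reducing the statement to the additive-noise situation, where the $H$-derivative of the solution admits the tractable representation from case (ii) above. This is precisely the device that circumvents the obstruction flagged in case (iii): for $\sigma'\neq 0$ and $H\leq 1/2$ the direct representation of $\partial_H X^H$ involves ill-defined integrals such as $\int_0^T B_s^H \dd(\partial_H B_s^H)$, whereas after the transformation only the benign additive-noise derivative appears.

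Since $\inf_{x\in\R}\sigma(x)>0$ and $\sigma\in C_b^2(\R;\R)$, the primitive $F(x)=\int_{x_0}^x \sigma(y)^{-1}\dd y$ is a $C^3$-diffeomorphism of $\R$ with $F'=1/\sigma$ bounded and $(F^{-1})'=\sigma\circ F^{-1}$ bounded. Setting $Y_t^H=F(X_t^H)$, the chain rule for the scalar Doss--Sussmann solution \eqref{suss_gamma} (which for $m=1$ coincides with the pathwise solution for every continuous driver, so that no genuinely rough change-of-variables subtlety arises, and which here satisfies $F(h(\alpha,\beta))=F(\alpha)+\beta$ by \eqref{eq:Diff_h}) yields
\begin{align*}
	Y_t^H = \int_0^t \tilde\mu(Y_s^H)\dd s + B_t^H, \qquad t\in[0,T],
\end{align*}
with $\tilde\mu=(\mu/\sigma)\circ F^{-1}$. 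One checks directly from $\mu\in C_b^1$, $\sigma\in C_b^2$ and $\inf\sigma>0$ that $\tilde\mu\in C_b^1(\R;\R)$; in particular $\tilde\mu'$ is bounded. Moreover $\mathbf{E}[\varphi(X_t^H)]=\mathbf{E}[\psi(Y_t^H)]$ with $\psi=\varphi\circ F^{-1}$, and $\psi'=(\varphi'\circ F^{-1})\,(\sigma\circ F^{-1})$ is bounded because $\varphi\in C_b^2\subset C_b^1$ and $\sigma$ is bounded.

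Next I would apply the additive-noise analysis to the equation satisfied by $Y^H$. By case (ii) above and Proposition \ref{MvN-Frechet} the map $H\mapsto Y_t^H(\omega)$ is differentiable with derivative $\partial_H Y_t^H$, and the moment bound \eqref{bound_mom_frechet_add_noise}, applied with $\tilde\mu$ in place of $\mu$, gives $\sup_{H\in[a,b]}\mathbf{E}[|\partial_H Y_t^H|]<\infty$. Consequently, for $H_1,H_2\in[a,b]$,
\begin{align*}
	\big|\mathcal{E}_{\varphi,a,b,t}(H_1)-\mathcal{E}_{\varphi,a,b,t}(H_2)\big|
	= \Big|\int_{H_2}^{H_1}\mathbf{E}\big[\psi'(Y_t^H)\,\partial_H Y_t^H\big]\dd H\Big|
	\leq \|\psi'\|_\infty\,|H_1-H_2|\,\sup_{H\in[a,b]}\mathbf{E}\big[|\partial_H Y_t^H|\big],
\end{align*}
where the first equality uses the fundamental theorem of calculus in $H$ together with Fubini's theorem; this is the claimed Lipschitz estimate. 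Alternatively, one can avoid differentiating under the expectation by combining the pathwise additive-noise Lipschitz bound of Subsection 3.1 with the moment bound \eqref{moment_del_B}.

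\textbf{Main obstacle.} The delicate point is the justification of the change of variables $Y^H=F(X^H)$ and the verification that $\tilde\mu$ inherits the boundedness and $C^1$-regularity needed to apply \eqref{bound_mom_frechet_add_noise} uniformly over $[a,b]$. For $d=m=1$ this is essentially resolved by the explicit Doss--Sussmann representation \eqref{suss_gamma}, along which the transformation reduces to a deterministic identity; the only remaining technical input is the uniform integrability of $\partial_H Y_t^H$ over $H\in[a,b]$, which follows from Theorem \ref{thm:SmoothnessRegularityFBM} and \eqref{moment_del_B}. I note, finally, that this pathwise route uses only $\varphi\in C_b^1$, so the stronger hypothesis $\varphi''\in C^{\lambda}$ in the statement appears inessential here; it is presumably required by the alternative Malliavin-calculus and parabolic-PDE argument of \cite{RichardTalay}, which controls the fixed-time marginal directly through integration by parts rather than through the pathwise solution map.
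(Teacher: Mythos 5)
Your proposal is correct and follows essentially the same route as the paper's own argument: a Lamperti transformation to an additive-noise SDE with a $C_b^1$ drift, followed by the pathwise $H$-differentiability of the additive-noise solution and the uniform moment bound \eqref{bound_mom_frechet_add_noise} to get the Lipschitz estimate (the paper merely orders the steps slightly differently, first using the Lipschitz continuity of $\varphi\circ F^{-1}$ and then writing $Z_t^{H_1}-Z_t^{H_2}$ as an integral of the derivative). Your closing observation that the pathwise route only needs $\varphi\in C_b^1$, with the stronger hypotheses on $\varphi$ stemming from the Malliavin/PDE proof of Richard--Talay, matches the paper's presentation as well.
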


The  Lamperti-transformation
$$F(x)= \int_0^x \frac{1}{\sigma(y)}\dd y, \qquad x \in \mathbb{R}, $$
transforms the SDE
\begin{align*}
	X_t^H = x_0+  \int_0^t \mu(X^H_s) \dd s +  \int_0^t \sigma(X^H_s) \dd B^H_s, \qquad t \in [0,T],
\end{align*} via $Z_t^H=F(X_t^H)$
into the additive noise SDE 
$$   Z_t^H=F(x_0) + \int_0^t \frac{\mu(F^{-1}(Z_s^H))}{\sigma(F^{-1}(Z_s^H)) } \dd s + B_t^H, \qquad t \in [0,T].   $$
Note that the assumptions on the coefficients imply that $F$ as well as $F^{-1}$ are globally Lipschitz continuous and also that the new drift
$$ \mu_{F}\colon  \R  \rightarrow \R, \quad \mu_{F}(x)= \frac{\mu(F^{-1}(x))}{\sigma(F^{-1}(x))},$$ belongs to $C_b^{1}(\R;\R).$ Hence, the Fr\'echet differentiability of the Doss--Sussmann map and the representation of the derivative as well as equation \eqref{bound_mom_frechet_add_noise} provide an alternative proof for the above statement: we have
\begin{align*}
\left|	\mathbf{E} \big[ \varphi(X_t^{H_1}) \big] - 	\mathbf{E} \big[ \varphi(X_t^{H_2}) \big] \right| &= \left|	\mathbf{E} \big[ \varphi(F^{-1}(Z_t^{H_1})) \big] - 	\mathbf{E} \big[ \varphi(F^{-1}(Z_t^{H_2})) \big]	\right| \\& \leq K_{\varphi} K_{F^{-1}}  	\mathbf{E} \big [ \big|Z_t^{H_1}- Z_t^{H_2} \big|   \big] \\ & = K_{\varphi} K_{F^{-1}} 
	\mathbf{E} \left[  \left|  \int_{H_1}^{H_2} Y_t^h \dd h   \right|  \right]  \\ & \leq  K_{\varphi} K_{F^{-1}} 
 \int_{H_1}^{H_2} 	\mathbf{E} \big[  |  Y_t^h |  \big] \dd h   \\ & \leq   \left( K_{\varphi} K_{F^{-1}} \sup_{H \in [a,b]}	\sup_{t \in [0,T]}	\mathbf{E} 	\big[ |Y_t^H| \big] \right) \cdot |H_1-H_2|.
\end{align*}

Using the Lamperti-transformation also a more  irregular functional of the solution is analysed   in  \cite{RichardTalay:prem,RichardTalay}, namely the 
first hitting time of the point $x=1$, i.e.,
of $$ \tau_X^H = \inf \{ t \geq 0: \, X_t^H=1\}. $$
Applying again PDE as well as Malliavin  techniques, the authors
  obtain in Theorem 5.2 of \cite{RichardTalay} the following result:
 
 \begin{theorem}
  Let $H \in (1/4,1)$,  $\lambda >0$ and $x_0<1$. Moreover, let 
 $\mu \in C^{1}_b(\R; \R)$, $\sigma \in C^2_b(\R; \R)$ and $\inf_{x \in \R} \sigma(x) >0$. Then, there exists a constant $C(\lambda,H,x_0,\mu,\sigma)>0$ such that
 $$ \left| \mathbf{E} \big[ \exp({-\lambda \tau_X^H}) \big] -   \mathbf{E} \big[\exp({-\lambda \tau_X^{ 1/2 }} ) \big] \right| \leq C(\lambda,H,x_0,\mu,\sigma) \left| H  - \tfrac{1}{2} \right| $$
 for all $$ \lambda > \sup_{x \in \mathbb{R}} \left|  \mu_F'(x)\right|.$$
 \end{theorem}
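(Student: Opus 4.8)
The plan is to use the Lamperti transformation to reduce to additive noise, then to set up an exact comparison identity between the fractional and the Brownian hitting problems via an auxiliary elliptic ODE, and finally to estimate the resulting error term by Malliavin calculus. First I would pass to $Z_t^H = F(X_t^H)$ with $F(x)=\int_0^x \sigma(y)^{-1}\dd y$. Since $F$ is a strictly increasing bijection, the hitting time is preserved: with $\ell := F(1)$ and $z_0 := F(x_0) < \ell$ one has $\tau_X^H = \tau_Z^H := \inf\{t\ge 0 : Z_t^H = \ell\}$, and $Z^H$ solves the additive-noise equation $\dd Z_t^H = \mu_F(Z_t^H)\dd t + \dd B_t^H$ with $\mu_F \in C^1_b(\R;\R)$. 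It therefore suffices to show that $H \mapsto \mathbf{E}[e^{-\lambda\tau_Z^H}]$ is Lipschitz at $H=\tfrac12$, and here the hypothesis $\lambda > \sup_{x\in\R}|\mu_F'(x)|$ will be exactly what keeps all the time integrals below convergent for hitting times of arbitrary size.

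Second, I would introduce the reference solution. Let $u_\lambda \in C^2((-\infty,\ell];\R)$ be the bounded solution of
\begin{align*}
\tfrac12 u_\lambda''(z) + \mu_F(z)\,u_\lambda'(z) - \lambda u_\lambda(z) = 0, \qquad z \le \ell, \qquad u_\lambda(\ell)=1;
\end{align*}
the condition $\lambda > \sup_{x\in\R}|\mu_F'(x)|$ guarantees existence of such a solution with bounded derivatives $u_\lambda',u_\lambda''$ and the correct decay as $z\to-\infty$. By Feynman--Kac and optional stopping in the Brownian case $H=\tfrac12$ one has $u_\lambda(z_0) = \mathbf{E}[e^{-\lambda\tau_Z^{1/2}}]$. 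Next I would apply the pathwise, first-order change-of-variable formula --- Young for $H>\tfrac12$, the geometric rough path calculus for $\tfrac14 < H < \tfrac12$ --- to $s \mapsto e^{-\lambda s} u_\lambda(Z_s^H)$ up to $\tau\wedge t$, let $t\to\infty$, and use $u_\lambda(\ell)=1$ together with the ODE to obtain the exact comparison identity
\begin{align*}
\mathbf{E}\big[e^{-\lambda\tau_X^H}\big] - \mathbf{E}\big[e^{-\lambda\tau_X^{1/2}}\big] = \mathbf{E}\Big[\int_0^{\tau} e^{-\lambda s} u_\lambda'(Z_s^H)\,\dd B_s^H\Big] - \tfrac12\,\mathbf{E}\Big[\int_0^{\tau} e^{-\lambda s} u_\lambda''(Z_s^H)\,\dd s\Big].
\end{align*}
At $H=\tfrac12$ the right-hand side vanishes, because the expectation of the corresponding Stratonovich integral reproduces precisely the Itô correction $\tfrac12\mathbf{E}[\int_0^\tau e^{-\lambda s} u_\lambda''(Z_s^{1/2})\dd s]$; this is the consistency check that also re-identifies $u_\lambda(z_0)$.

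Third, I would estimate the right-hand side by $C|H-\tfrac12|$. The essential point is the fractional integral term: rewriting the pathwise integral as a Skorokhod (divergence) integral plus its Malliavin trace, the divergence has zero expectation, so $\mathbf{E}[\int_0^\tau e^{-\lambda s}u_\lambda'(Z_s^H)\dd B_s^H]$ equals the expected trace, which is governed by the fBm covariance kernel $|r-s|^{2H-2}$ and the Malliavin derivative $DZ^H$. At $H=\tfrac12$ this trace equals $\tfrac12\mathbf{E}[\int_0^\tau e^{-\lambda s}u_\lambda''(Z_s^{1/2})\dd s]$, and its deviation for $H\ne\tfrac12$ is controlled by the smooth, Lipschitz dependence on $H$ of the kernel and of $(Z^H,DZ^H)$, which rests on Proposition \ref{MvN-Frechet}, the uniform moment bounds \eqref{bound_mom_frechet_add_noise} and \eqref{moment_del_B}, and the Malliavin nondegeneracy coming from $\inf_{x\in\R}\sigma(x)>0$; the latter yields bounded densities for the relevant marginals, which is what makes the level-$\ell$ boundary contribution harmless. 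Throughout, the weight $e^{-\lambda s}$ together with $\lambda > \sup_{x\in\R}|\mu_F'(x)|$ dominates the exponential-in-time growth $e^{\sup_{x\in\R}|\mu_F'(x)|\,s}$ of the sensitivity process $Y^h=\partial_h Z^h$ and of the Malliavin derivatives, so the $s$-integrals converge uniformly and the constant $C$ stays finite even though $\tau$ may be arbitrarily large.

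The hardest step is the third: giving rigorous meaning to the expectation of the fractional integral up to the random, non-smooth hitting time $\tau$ and extracting the factor $|H-\tfrac12|$ from it. The hitting time is not a smooth functional of the driving path --- differentiating it in $H$ would formally require the time-derivative of the nowhere-differentiable trajectory at the level crossing --- so the naive implicit-function computation is unavailable. The resolution is the Malliavin integration-by-parts built on the nondegeneracy $\inf_{x\in\R}\sigma(x)>0$: it simultaneously justifies the divergence computation up to $\tau$ and produces the bounded densities that tame the crossing-boundary contribution, thereby converting the singular $H$-derivative of $\tau$ into the benign, quantifiable trace estimate above.
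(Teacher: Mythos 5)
Your outline follows essentially the same route as the proof the paper points to: the survey does not prove this theorem itself but cites Theorem 5.2 of Richard--Talay, describing its method as the Lamperti transformation combined with PDE and Malliavin techniques, which is precisely your combination of the reduction to additive noise, the elliptic reference function $u_\lambda$ with the Feynman--Kac identity at $H=\tfrac12$, and the Skorokhod-plus-trace decomposition of the pathwise integral with the growth of the Malliavin derivative dominated by $\lambda > \sup_{x}|\mu_F'(x)|$. The only caveat is that your third step --- justifying the decomposition up to the random hitting time and extracting the factor $|H-\tfrac12|$ from the trace term --- is where essentially all of the technical work of the cited proof lies, and your sketch acknowledges rather than executes it.
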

 For an explicit representation  of the constant  $C(\lambda,H,x_0,\mu,\sigma)$ and a detailed discussion of its  properties, see Theorem 5.2 and Remark 5.5 in \cite{RichardTalay}.

\smallskip
 
 \subsection{The multi-dimensional case for $H>1/2$}\label{sec:Hge1/2}
 
 Multi-dimensional SDEs driven by a multi-dimensional fBm are again understood in a pathwise sense. For $H>1/2$ one can rely on Young integration theory,
 since for $f \in C^{\alpha}([0,T]; \R)$, $g  \in C^{\beta}([0,T]; \R)$ with $\alpha +\beta >1$ the Riemann--Stieltjes integral 
 $$ \mathcal{I}(f,g)=\int_0^T f(t) \dd g(t) $$
 exists, see, e.g., the seminal article \cite{Young} or the work \cite{zaehle}, which relies on fractional calculus.
 
 For our purposes, it will be beneficial to use the approach of the authors of  \cite{NR}, who work in Besov-type spaces due to their use of fractional calculus. So, let $\alpha \in (0,1/2)$ and denote by $W_1^\alpha([0,T];\R^d)$ the space of measurable functions $f \colon [0,T] \to \R^d$ such that
 \begin{align*}
 	\|f\|_{\alpha,1} = \sup_{t \in [0,T]} \bigg(\|f(t)\| + \int_0^t \frac{\|f(t)-f(s)\|}{|t-s|^{1+\alpha}} \dd s\bigg) < \infty.
 \end{align*}
 Moreover, denote by $W_{2}^{1-\alpha}([0,T];\R^m)$ the set of measurable functions $g \colon [0,T] \to \R^m$ such that
 \begin{align*}
 	\|g\|_{1-\alpha,2} := \sup_{0\leq s < t \leq T} \bigg(\frac{\|g(t)-g(s)\|}{|t-s|^{1-\alpha}} + \int_s^t \frac{\|g(y)-g(s)\|}{|y-s|^{2-\alpha}} \dd y\bigg) < \infty.
 \end{align*}
 
Note that one has the embeddings 
 \begin{align*}
 	C^{\alpha+\varepsilon}([0,T];\R^d) \subseteq W_1^\alpha([0,T];\R^d)
 \end{align*}
 and
 \begin{align*}
 	C^{1-\alpha+\varepsilon}([0,T];\R^d) \subseteq W_2^{1-\alpha}([0,T];\R^d) \subseteq C^{1-\alpha}([0,T];\R^d),
 \end{align*}
 for $\varepsilon >0$.
 Thus, the sample paths of the MvN-fBm from Proposition \ref{MvN-Frechet} belong  to $W_1^{\alpha}([0,T];\R^d)$ for $\alpha <H$ and to $W_2^{1-\alpha}([0,T];\R^d)$ for $\alpha > 1-H$.

 While  \cite{NR} establishes existence and uniqueness for the deterministic integral equation
 \begin{align*}
 	x_t = x_0 + \int_0^t \mu(x_s) \dd s + \int_0^t \sigma(x_s) \dd g_s, \qquad t \in [0,T],
 \end{align*}
 for $g \in W_{2}^{1-\alpha}([0,T];\R^m)$,  $x_0 \in \R^d$ and $\mu \colon \R^d \rightarrow \R^d$, $\sigma\colon \R^d \rightarrow \R^{d \times m}$ satisfying mild smoothness assumptions, the work \cite{NualartSauss} analyses the Fr\'echet differentiability of the solution map and obtains the following result:
 \begin{proposition}
 	Let $\alpha \in (0,\frac{1}{2})$ and $g\in W_2^{1-\alpha}([0,T];\R^m)$. Moreover, let $\mu \in C_b^{3}(\R^d;\R^d)$ and $\sigma \in C_b^{3}(\R^d;\R^{d \times m})$. Denote by $x \in W^{\alpha}_1([0,T];\R^d)$ the solution of 
 	\begin{align*}
 		x_t = x_0 + \int_0^t \mu(x_s) \dd s + \int_0^t \sigma(x_s) \dd g_s, \qquad t \in [0,T].
 	\end{align*}
 	The mapping $$\Gamma \colon W^{1-\alpha}_2([0,T];\R^m) \to W^\alpha_1([0,T];\R^d), \,\, \ g \mapsto x(g),$$ is Fréchet differentiable.
 	For $h \in W^{1-\alpha}_2([0,T];\R^m)$ its derivative is given by
 	\begin{align*}
 		\big(\Gamma'(g)h\big)(t) = \int_0^t \Phi_t(s) \dd h_s,
 	\end{align*}
 	where $\Phi_t(s) \in \R^{d\times m}$ is defined as follows: letting $\partial_k$ denote the partial derivative with respect to the $k$-th variable, $s \mapsto \Phi_t(s)$ satisfies
 	\begin{align*}
 		\Phi^{ij}_t(s) = \sigma^{ij}(x_s) + \sum_{k = 1}^d \int_s^t \partial_k \mu^i(x_u) \Phi^{kj}_u(s) \dd u + \sum_{k =1}^{d} \sum_{l=1}^m \int_{s}^t \partial_k \sigma^{il}(x_u) \Phi^{kj}_u(s) \dd g_u^l
 	\end{align*}
 	for $ 0 \leq s \leq t \leq T$ and $\Phi^{ij}_t(s) = 0$ for $s>t$, where $i = 1, \dots,d, \ j = 1, \dots,m$.
 \end{proposition}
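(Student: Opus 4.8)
The plan is to identify the candidate derivative by linearizing the fixed-point equation, verify that it defines a bounded linear operator, and then close a quantitative remainder estimate. Throughout I would rely on the fundamental Young-integral bound from \cite{NR}, namely that $\|\int_0^\cdot f\,\dd g\|_{\alpha,1} \lesssim \|f\|_{\alpha,1}\,\|g\|_{1-\alpha,2}$, together with the stability estimate $\|\Gamma(g+h) - \Gamma(g)\|_{\alpha,1} \lesssim \|h\|_{1-\alpha,2}$ for the solution map, which issues from the same fractional-calculus toolbox used in \cite{NR} to establish well-posedness.

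First I would write $x = \Gamma(g)$ and formally differentiate the equation in the direction $h$, which produces the linear Young equation
\begin{align*}
 y_t = \int_0^t D\mu(x_s)\,y_s\,\dd s + \int_0^t D\sigma(x_s)\,y_s\,\dd g_s + \int_0^t \sigma(x_s)\,\dd h_s,
\end{align*}
where $D\mu, D\sigma$ denote the Jacobians. Its unique solvability in $W_1^\alpha$ follows from the linear theory for Young equations, the coefficients $u \mapsto D\mu(x_u)$ and $u \mapsto D\sigma(x_u)$ lying in $W_1^\alpha$ because $\mu,\sigma \in C_b^3$ and $x \in W_1^\alpha$. I would then introduce the Jacobian flow $J_{s\to t}$, the $\R^{d\times d}$-valued solution of the homogeneous linear equation with $J_{s\to s} = I$, and verify by Duhamel's principle that $y_t = \int_0^t J_{s\to t}\,\sigma(x_s)\,\dd h_s$. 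Setting $\Phi_t(s) = J_{s\to t}\,\sigma(x_s)$ for $s\leq t$ and $\Phi_t(s)=0$ otherwise recovers exactly the integral equation in the statement, and the Young bound applied to this two-parameter kernel shows that $h \mapsto y$ is a bounded linear operator $W_2^{1-\alpha} \to W_1^\alpha$; this is the proposed $\Gamma'(g)$.

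The substantive step is to prove $\Gamma(g+h) - \Gamma(g) - \Gamma'(g)h = o(\|h\|_{1-\alpha,2})$ in $W_1^\alpha$. Writing $z = \Gamma(g+h) - \Gamma(g)$ and $r = z - y$, I would subtract the equation for $y$ from the equation satisfied by $z$ and Taylor-expand $\mu$ and $\sigma$ to first order about $x = \Gamma(g)$. The remainder $r$ then solves a linear Young equation driven by $g$ with the \emph{same} linear part as the $y$-equation, but with an inhomogeneity built from, on the one hand, the second-order Taylor remainders of $\mu,\sigma$ along $z$, controlled by $\|D^2\mu\|_\infty,\|D^2\sigma\|_\infty$ and $\|z\|_{\alpha,1}^2 \lesssim \|h\|_{1-\alpha,2}^2$, and on the other hand the genuinely bilinear cross term $\int_0^\cdot \big(D\sigma(\Gamma(g+h)) - D\sigma(\Gamma(g))\big)\,y\,\dd h$, which the Young bound estimates by $\|z\|_{\alpha,1}\,\|y\|_{\alpha,1}\,\|h\|_{1-\alpha,2} \lesssim \|h\|_{1-\alpha,2}^2$. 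Applying the a priori bound for the linear $r$-equation via a Young--Gronwall argument then yields $\|r\|_{\alpha,1} \lesssim \|h\|_{1-\alpha,2}^2$, which is the desired superlinear decay.

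I expect the main obstacle to be this bilinear cross term and, more generally, the control of Young integrals whose integrands are themselves only of finite $W_1^\alpha$ regularity: the product and composition estimates needed to bound quantities such as $\|(D\sigma(\Gamma(g+h)) - D\sigma(\Gamma(g)))\,y\|_{\alpha,1}$ require the full strength of the fractional-calculus estimates of \cite{NR}, and care is needed because the integration limit $t$ appears simultaneously inside the kernel $\Phi_t(s)$ and as the upper endpoint of integration. The role of the hypothesis $\mu,\sigma\in C_b^3$, rather than merely $C_b^2$, is precisely to guarantee that $D\mu,D\sigma$ are Lipschitz with bounded derivative, so that the composed coefficients inhabit $W_1^\alpha$ with norms bounded uniformly along $z$ and the Young--Gronwall estimate closes uniformly in $h$.
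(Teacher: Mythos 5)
This proposition is quoted in the survey from \cite{NualartSauss} without proof, so there is no in-paper argument to compare against; your sketch follows essentially the same route as that original reference (linearize the fixed-point equation, identify the derivative as the solution of the linear Young equation, represent it by variation of constants through the Jacobian flow so that $\Phi_t(s)=J_{s\to t}\,\sigma(x_s)$ solves the stated equation, and close a quadratic remainder estimate using the Nualart--R\u{a}\c{s}canu integral bounds). Two small caveats on the execution. First, your ``bilinear cross term'' is not quite the one that actually appears: subtracting the $y$-equation from the $z$-equation, the extra $h$-driven term is $\int_0^\cdot\big(\sigma(\Gamma(g+h)_s)-\sigma(\Gamma(g)_s)\big)\dd h_s$, bounded by $\|z\|_{\alpha,1}\|h\|_{1-\alpha,2}\lesssim\|h\|_{1-\alpha,2}^2$; the term you wrote, with $D\sigma$ differences against $y\,\dd h$, does not arise in this decomposition (though the estimate you give for it is of the same order, so nothing breaks). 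Second, the ``Young--Gronwall'' step is not a naive Gronwall inequality in this Besov scale: because the basic estimate $\|\int_0^\cdot f\dd g\|_{\alpha,1}\lesssim\|f\|_{\alpha,1}\|g\|_{1-\alpha,2}$ has a constant that cannot simply be iterated, one has to work with the exponentially weighted norms $\sup_t e^{-\lambda t}(\ldots)$ of \cite{NR} (or restart on small subintervals) to absorb the linear part; this is exactly the device used in \cite{NR,NualartSauss} and is implicit in the toolbox you invoke, but it is the place where the argument genuinely has to be quantitative. You are also right that $C_b^3$ (rather than $C_b^2$) is what makes the second-order Taylor remainders land in $W_1^\alpha$ with norm of order $\|z\|_{\alpha,1}^2$. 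Overall the proposal is a correct outline of the standard proof.
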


 It turns out that the MvN-fBm from Proposition \ref{MvN-Frechet} is also Fr\'echet differentiable with  $W^{1-\alpha}_2([0,T];\R^m)$ as target space, see page 30 in \cite{Koch}.
 
 \begin{proposition}
 		Let  $0 <a < b <1$, $1/2< 1-\alpha < a < H < b$ and
 		\begin{align*}
 			\widehat{B}_t^H := B_t^{1/2} + \int_{1/2}^H \partial_h B_t^{h} \dd h, \qquad H \in [a,b], \quad t \in [0,T].
 		\end{align*}
 		Then 
 		for all $\omega \in \Omega$ the map \begin{center} $ [a,b] \ni H \mapsto \widehat{B}^H(\omega) \in W_2^{1-\alpha}([0,T];\R)$ \end{center} is  Fr\'echet differentiable with derivative $\partial_H B^{H}$. 
 	
\end{proposition}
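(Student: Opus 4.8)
The plan is to exploit the same integral representation of $\widehat{B}^H$ used in Proposition \ref{MvN-Frechet}, but now to measure everything in the stronger norm $\|\cdot\|_{1-\alpha,2}$. Since this norm is built entirely from increments of its argument, I can avoid any abstract Banach-space-valued (Bochner) integration. For fixed $\omega$ and $\delta$ small enough that $H+\delta \in (a,b)$, I would first write, from the definition of $\widehat{B}^H$,
\begin{align*}
\widehat{B}^{H+\delta}_t - \widehat{B}^H_t - \delta\,\partial_H B^H_t = \int_H^{H+\delta}\big(\partial_h B^h_t - \partial_H B^H_t\big)\dd h,
\end{align*}
and then use the second-order identity $\partial_h B^h_t - \partial_H B^H_t = \int_H^h B^{r,2}_t\dd r$, which holds for all $\omega$ by Theorem \ref{thm:SmoothnessRegularityFBM} combined with joint continuity, exactly as the first-order identity underlying Proposition \ref{MvN-Frechet}. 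Subtracting the analogous expression at time $s$, applying Fubini, and writing $R^\delta := \widehat{B}^{H+\delta}-\widehat{B}^H-\delta\,\partial_H B^H$ for the remainder, I obtain for $0\le s<t\le T$
\begin{align*}
R^\delta_t - R^\delta_s = \int_H^{H+\delta}\int_H^h \big(B^{r,2}_t-B^{r,2}_s\big)\dd r\,\dd h.
\end{align*}

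Next I would insert the uniform Hölder bound on the second-derivative process. By Theorem \ref{thm:SmoothnessRegularityFBM}(ii) applied with $k=2$, for every $\gamma\in(0,a)$ there is a finite random variable $K=K_{\gamma,a,b,2,T}(\omega)$ with $\sup_{r\in[a,b]}|B^{r,2}_t-B^{r,2}_s|\le K|t-s|^\gamma$. The crucial choice is to fix $\gamma$ with $1-\alpha<\gamma<a$, which is possible precisely because the hypothesis gives $1-\alpha<a$. The two nested elementary integrals in $h$ and $r$ then yield $|R^\delta_t-R^\delta_s|\le \tfrac12\,\delta^2\, K\,|t-s|^\gamma$. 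Feeding this into the two parts of the $W_2^{1-\alpha}$-norm, the first term is bounded by $\tfrac12\delta^2 K|t-s|^{\gamma-(1-\alpha)}\le \tfrac12\delta^2 K\,T^{\gamma-1+\alpha}$, while the second requires the singular integral $\int_s^t|y-s|^{\gamma-2+\alpha}\dd y$, which is finite exactly when $\gamma>1-\alpha$ and equals $|t-s|^{\gamma-1+\alpha}/(\gamma-1+\alpha)$. Taking the supremum over $s<t$ gives
\begin{align*}
\big\|R^\delta\big\|_{1-\alpha,2}\le \tfrac12\,\delta^2\,K\,T^{\gamma-1+\alpha}\Big(1+\tfrac{1}{\gamma-1+\alpha}\Big),
\end{align*}
so that dividing by $|\delta|$ and letting $\delta\to0$ establishes Fréchet differentiability with derivative $\partial_H B^H$, in fact with a quadratic remainder. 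Along the way I should record that $\partial_h B^h(\omega)\in W_2^{1-\alpha}([0,T];\R)$ for each $h$, which follows from the same Hölder regularity via the embedding $C^{1-\alpha+\varepsilon}([0,T];\R)\subseteq W_2^{1-\alpha}([0,T];\R)$ stated above, choosing $\varepsilon$ so small that $1-\alpha+\varepsilon<a$.

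The only genuine obstacle is the integrability of the singular kernel $|y-s|^{\gamma-2+\alpha}$ near $y=s$ in the second part of the $W_2^{1-\alpha}$-norm; this is what forces the regularity of the differentiated process to strictly exceed $1-\alpha$, and is exactly where the standing assumption $1-\alpha<a<H$ is consumed. Everything else is routine once the uniform Hölder estimate for $B^{r,2}$ from Theorem \ref{thm:SmoothnessRegularityFBM}(ii) is in hand. I would also remark that the pointwise-in-$\omega$ statement causes no difficulty: as in Proposition \ref{MvN-Frechet}, one works throughout with the jointly continuous versions $B^{r,k}$ provided by Theorem \ref{thm:SmoothnessRegularityFBM}(i), for which the fundamental-theorem-of-calculus identities in $h$ and in $r$ hold pathwise.
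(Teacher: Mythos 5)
Your overall strategy is sound, and the two quantitative points you identify are exactly the right ones: the choice of a H\"older exponent $\gamma$ with $1-\alpha<\gamma<a$ (which is where the standing hypothesis $1-\alpha<a$ is used) and the integrability of the singular kernel $|y-s|^{\gamma-2+\alpha}$ in the second half of the $\|\cdot\|_{1-\alpha,2}$-norm. The paper itself only cites page~30 of \cite{Koch} for this proposition, so there is no in-text proof to compare against line by line; your argument is a legitimate self-contained route and even yields a quadratic remainder.

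There is, however, one genuine gap: the second-order identity $\partial_h B^h_t-\partial_H B^H_t=\int_H^h B^{r,2}_t\dd r$ is \emph{not} of the same nature as the first-order identity underlying Proposition \ref{MvN-Frechet}. There, $\widehat{B}^H_t=B^{1/2}_t+\int_{1/2}^H B^{h,1}_t\dd h$ holds for every $\omega$ \emph{by definition}, so the fundamental theorem of calculus applies pathwise. By contrast, the relation between the two separately constructed processes $B^{\cdot,1}$ and $B^{\cdot,2}$ comes only from Theorem \ref{thm:SmoothnessRegularityFBM}(iii), which is an almost-sure statement (one null set after intersecting over a countable dense set of times and using joint continuity). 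On the exceptional null set your formula for $R^\delta_t-R^\delta_s$ has no justification, so as written you prove the claim only for almost all $\omega$, whereas the proposition asserts it for all $\omega$. The cleanest repair avoids the second derivative altogether: write $R^\delta_t-R^\delta_s=\int_H^{H+\delta}\bigl[(B^{h,1}_t-B^{h,1}_s)-(B^{H,1}_t-B^{H,1}_s)\bigr]\dd h$ and show that $h\mapsto B^{h,1}(\omega)\in W_2^{1-\alpha}([0,T];\R)$ is continuous for \emph{every} $\omega$, by interpolating the uniform H\"older bound $\sup_{h\in[a,b]}|B^{h,1}_t-B^{h,1}_s|\le K|t-s|^{\gamma}$ (with $\gamma<a$) against the uniform convergence $\sup_{t\in[0,T]}|B^{h,1}_t-B^{H,1}_t|\to 0$ as $h\to H$ (which follows from joint continuity on the compact set $[a,b]\times[0,T]$, valid for all $\omega$ by Theorem \ref{thm:SmoothnessRegularityFBM}(i)): for $\theta\in(0,1)$ with $\theta\gamma>1-\alpha$ one gets $\|B^{h,1}-B^{H,1}\|_{1-\alpha,2}\le C K^{\theta}\bigl(\sup_t|B^{h,1}_t-B^{H,1}_t|\bigr)^{1-\theta}\to 0$. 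The Banach-valued fundamental theorem of calculus then gives the Fr\'echet derivative $\partial_H B^H$ for every $\omega$, at the price of losing your quadratic remainder. Everything else in your write-up, including the membership $\partial_h B^h(\omega)\in W_2^{1-\alpha}([0,T];\R)$ via the embedding of $C^{1-\alpha+\varepsilon}$, is correct.
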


As before, we identify $\widehat{B}^H$ and $B^H$. Thus, we obtain for  all $\omega \in \Omega$, by the chain rule, that
 \begin{align}\label{deriv_multi-sde_1}
 	{\partial_ H} X^H_t(\omega) = {\partial_H} \big( \Gamma(B^H(\omega))(t) \big) = \Big(\Gamma'(B^H(\omega))  \partial_H B^{H}(\omega)\Big)(t) =\int_0^t \Phi_t(s) \dd \partial_H B^{H}_s(\omega) ,
 \end{align}
 with $X^H = \Gamma(B^H)$ and where $\Phi_t(s)$  is given by \begin{equation}\label{deriv_multi-sde_2}
 \begin{aligned}
 	\Phi^{ij}_t(s) = \sigma^{ij}\big(X^H_s(\omega)\big) &+ \sum_{k = 1}^d \int_s^t \partial_k \mu^i\big(X^H_u(\omega)\big) \Phi^{kj}_u(s) \dd u\\
 	&+ \sum_{k =1}^{d} \sum_{l=1}^m \int_{s}^t \partial_k \sigma^{il}\big(X^H_u(\omega)\big) \Phi^{kj}_u(s) \dd B^{H,l}_u(\omega)
 \end{aligned} \end{equation}
 for $ 0 \leq s \leq t \leq T$ and $\Phi^{ij}_t(s) = 0$ for $s>t$, where $i = 1, \dots,d, \ j = 1, \dots,m$. 
 In the one-dimensional case we have in particular that
 \begin{align*}
 	\partial_H X_t^H 
 	& = \int_0^t\exp\left( \int_s^t \mu'(X_{\tau}^H) \dd \tau +  \int_s^t \sigma'(X_{\tau}^H) \dd B_{\tau}^H \right)  \sigma(X_s^H) \dd \left( {\partial_H}B_{s}^H\right), \qquad   t \in [0,T].
 \end{align*}

 \smallskip

 \subsection{The multi-dimensional case for $H\leq 1/2$} \label{sec:Hleq1/2}
General multi-dimensional SDEs driven by fBm of the form
 	\begin{align}\label{rough_sde}
 		\dd X_t^H = \sigma(X_t^H) \dd B_t^H, \quad t \in [0,T], \qquad X_0^H = x_0 \in \R^d,
 	\end{align}
have been successfully studied using rough path theory if $H>1/4$ in the work \cite{CoutinQian}. Rough path theory was initiated in the seminal works \cite{TL1,TL2} by T. Lyons in the 1990s. See also the monographs  \cite{FrizVictoir,FrizHairer} for recent exhibitions of this topic. 

In a nutshell, the rough path approach for equation \eqref{rough_sde} for $H>1/3$ can be described as follows:
let  $B[n]^H$ be the piecewise linear $n$-th dyadic approximation of $B^H$, that is
$$ B[n]_t^H= B^H_{t_k} +  \frac{ t- t_k }{t_{k+1}-t_k}   \left(  B^{H}_{t_{k+1}} -     B^{H,}_{t_{k}} \right), \qquad t \in [t_k,t_{k+1}],$$
where $$ t_{k}=  	 \frac{k}{2^n}T, \qquad k=0, \ldots, 2^n, \,\, n \in \mathbb{N}.$$ Then,  for fixed $\omega \in \Omega$, the ordinary differential equation
$$ \dd X[n]^H_t(\omega )= \sigma(X[n]^H_t(\omega )) \dd B[n]_t^H(\omega ) , \quad t \in [0,T], \qquad X^H_0(\omega )=x_0 \in \mathbb{R}^d, $$
has a unique solution, if $\sigma \colon \R^d \rightarrow \R^{d \times m}$ is globally Lipschitz continuous. 
However,   for almost all $\omega \in \Omega$ also the limits
	\begin{align*}
	X_t^H(\omega) = \lim_{n \to \infty}X[n]^H_t(\omega ), \qquad t \in [0,T], 
\end{align*} 
 exist, if $\sigma \in C_{b}^{3}(\R^d; \R^{d \times m})$. This is a consequence of the universal limit theorem of Lyons, see, e.g., \cite{LyonsNotes},  and the almost sure convergence of 
$$
 		{\bf B}[n]_{s,t}^H =  ({\bf B}^{{\bf 1}}[n]_{s,t}^H ,{\bf B^2}[n]^H_{s,t}), \qquad 0 \leq s < t \leq T,
$$
 	in the $\rho$-H\"older semi-norm for $ 1/3 < \rho <H$,  where
 	$$ {\bf B}^{ \bf 1}[n]_{s,t}^H = B[n]^{H}_t - B[n]^{H}_s $$ 
 	and 
 	$$ {\bf B}^{ \bf 2}[n]^H_{s,t}(i,j) =  \int_s^t \left( B[n]^{H,i}_{\tau} -  B[n]^{H,i}_{s} \right) \dd  B[n]^{H,j}_{\tau}, \qquad i,j=1, \ldots, m. $$   The limit object 	${\bf B}^H=({\bf B}^{{\bf 1},H}, {\bf B}^{{\bf 2},H})$ is called a rough path over $B^H$.
	The $\rho$-H\"older semi-norm mentioned above is given by
 	$$ d_{\rho}( {\bf w}, {\bf v}) =   \sup_{0 \leq s < t \leq T}  \frac{\big \| {\bf w}^{ \bf 1}_{s,t} - {\bf v}^{ \bf 1}_{s,t} \big\|}{|t-s|^{\rho}} +  \sup_{0 \leq s < t \leq T}  \frac{\big \| {\bf w}^{ \bf 2}_{s,t} - {\bf v}^{ \bf2}_{s,t} \big\|}{|t-s|^{2\rho}}
 	$$ for functions ${\bf w}^{\bf 1},{\bf v}^{\bf 1} \colon [0,T] \rightarrow \R^m$ and ${\bf w}^{\bf 2},{\bf v}^{ \bf 2}\colon \{ 0 \leq s \leq t \leq T\} \rightarrow \R^{m \times m}$. Instead of the H\"older semi-norms  one can also use appropriate $p$-variation distances as, e.g., in \cite{CoutinQian,LyonsNotes}.
  
  A crucial difference to the previous cases is that the solution map $\Gamma$ with $X^H=\Gamma(B^H)$ fails to have nice smoothness properties. However, the so-called It\^o--Lyons map, i.e., 	$$ X^{H}=\Gamma^{\textrm{IL}}( \mathbf{B^}^H),$$
  which maps the driving rough path onto the solution, is locally Lipschitz continuous with respect to a suitable rough 
  path topology.  Moreover, all the existence and uniqueness results for SDEs driven by fBm from the previous subsections can be embedded in the rough path framework and yield the same solution, if the SDE coefficients are sufficiently regular. For example, the multi-dimensional case with $H > 1/2$ can be analysed in the rough path framework using the well-defined pathwise Riemann--Stieltjes integrals
 		$$ {\bf B}^{{ \bf 2},H}_{s,t}(i,j) =  \int_s^t \left( B^{H,i}_{\tau} -  B^{H,i}_{s} \right) \dd  B^{H,j}_{\tau}, \qquad i,j=1, \ldots, m. $$

 Now, if the derivative 	 $Y^H = \partial_H X^H$ exists, it should  fulfil equations analoguous  to equations \eqref{deriv_multi-sde_1} and \eqref{deriv_multi-sde_2}. For example, for $d=m=1$ we would expect the derivative to satisfy the SDE
\begin{align*}
 		\dd Y_t^H = \sigma'(X_t^H) Y_t^H \dd B_t^H + \sigma(X_t^H) \dd \big(\partial B_t^H \big), \quad t\in [0,T], \qquad Y_0^H=0.
 \end{align*}
 Thus, to interpret and analyse this equation in a rough path framework, one needs to build a rough path over
 $$ Z^H =  \left( \begin{array} {c} B^H \\ \partial_H B^H \end{array} \right). $$	
However, this cannot be achieved using the dyadic linear approximations of $Z^H$. Theorem 4.3.10 in \cite{Koch} shows that the  dyadic linear approximations of $Z^H$ do not converge.

One can illustrate this result by considering the case  $H =1/2$, $d=1$, in which we would need a  rough path over $(W, \partial_H W)=(B^{1/2}, \partial_H B^{1/2})$. For the diagonal terms we have
 	\begin{align*}
 			\int_s^t \big( W_u[n]-  W_s[n]\big) \dd W_u[n] & = \frac{1}{2} (W_t[n]-W_s[n])^2 \longrightarrow \frac{1}{2} (W_t-W_s)^2 , \\
 				\int_s^t \big( \partial_H W_u[n]- \partial_H  W_s[n]\big) \dd (\partial_H  W_u[n]) & = \frac{1}{2} (\partial_H W_t[n]-\partial_H W_s[n])^2 \longrightarrow\frac{1}{2} (\partial_HW_t[n]-\partial_HW_s[n])^2
 		\end{align*}
 	almost surely for $n \rightarrow \infty$. But the off-diagonal terms as, e.g.,
 		\begin{align*}
 			\int_0^T \big(\partial_H W_u[n]- \partial_H W_0[n]\big) \dd W_u[n] = \frac{1}{2} \sum_{k = 1}^{2^n} \big(\partial_H W_{t_k^n} + \partial_H W_{t_{k-1}^n}\big)\big(W_{t_k^n} -W_{t_{k-1}^n}\big)
 		\end{align*}
 		do not  converge (even) in $L^2(\Omega)$. The correlation of $(W, \partial_H W)$ is  simply too strong.

Thus, the differentiability of equation \eqref{rough_sde} with respect to $H$ remains an open problem. However, we strongly suppose that an analogue to Proposition \ref{lip_exp_d=1} is valid in the general multi-dimensional case  for $H \in (1/3,1)$.
Establishing such a result would require to join the cases $H > 1/2$ and $H \leq 1/2$ and also a very careful tracking of the $H$-dependence of the constants in the Lipschitz estimates for the It\^o--Lyons map $\Gamma^{\textrm{IL}}$, see, e.g., Theorem  4 as well as Theorem 11 and its proof in \cite{BayerSinum}.
 	
 A preliminary result in this direction has been given in Example 44 in \cite{FrizVictoir10} and Theorem 24 in \cite{DeVecchiEtAl}: 
 \begin{theorem}
Let $(H_n)_{n \in \mathbb{N}} \subset (1/3,1/2]^{\mathbb{N}}$ be a sequence such that $H_0=\lim_{n \rightarrow \infty} H_n \in (1/3,1/2]$. Moreover, let 
 $\sigma \in C_b^{3}(\R^d; \R^{d \times m})$. Then, we have that
 $$ X^{H_n} \stackrel{\mathcal{L}}{\longrightarrow} X^{H_0}, \qquad n \rightarrow \infty,$$
in $C^{1/3}([0,T];\R^d)$, that is, we have 
 $$ \mathbf{E} \big[ \Phi(X^{H_n}) \big] \longrightarrow \mathbf{E} \big[\Phi(X^{H_0}) \big], \qquad n \rightarrow \infty, $$
 for all $\Phi \colon C^{1/3}([0,T]; \R^d) \rightarrow  \mathbb{R}$ which are bounded and continuous.
 \end{theorem}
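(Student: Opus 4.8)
The plan is to derive the weak convergence from two ingredients: the continuity of the It\^o--Lyons map in a rough path topology, and the convergence in law of the lifted fBms $\mathbf{B}^{H_n}$ towards $\mathbf{B}^{H_0}$. First I would fix an exponent $\rho$ with $1/3 < \rho < H_0$. Since $H_n \to H_0 > 1/3$, there is $N \in \mathbb{N}$ with $\rho < H_n$ for all $n \geq N$, so that for these $n$ the object $\mathbf{B}^{H_n} = (\mathbf{B}^{\mathbf{1},H_n}, \mathbf{B}^{\mathbf{2},H_n})$ from Subsection \ref{sec:Hleq1/2} is a genuine geometric $\rho$-H\"older rough path, and likewise $\mathbf{B}^{H_0}$. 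Under $\sigma \in C_b^3(\R^d;\R^{d \times m})$, the It\^o--Lyons map $\Gamma^{\mathrm{IL}}$ is locally Lipschitz continuous from the space of $\rho$-H\"older rough paths equipped with the metric $d_\rho$ into $C^{\rho}([0,T];\R^d)$ (see, e.g., \cite{CoutinQian,LyonsNotes,FrizVictoir,FrizHairer}), and $C^{\rho}([0,T];\R^d)$ embeds continuously into $C^{1/3}([0,T];\R^d)$. Viewed as a map into $C^{1/3}$, the solution map is therefore continuous on the full-measure set of genuine $\rho$-H\"older rough paths, and by the continuous mapping theorem for weak convergence it suffices to prove
$$ \mathbf{B}^{H_n} \stackrel{\mathcal{L}}{\longrightarrow} \mathbf{B}^{H_0}, \qquad n \to \infty, $$
with respect to $d_\rho$.

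To establish this, I would argue by tightness plus identification of the limit. For tightness, fix an auxiliary $\rho'$ with $\rho < \rho' < H_0$, so that $\rho' < H_n$ for $n$ large. The two levels of $\mathbf{B}^{H_n}$ belong to the first and second Wiener chaos of the underlying two-sided Brownian motion, whence Gaussian hypercontractivity upgrades second-moment bounds to $L^p$-bounds of every order; combined with the $H_n$-self-similarity of fBm this produces moment estimates of the increments that are uniform in $n \geq N$ and carry H\"older exponents strictly above $\rho$. A Kolmogorov-type tightness criterion for rough paths, exploiting the compactness of the embedding $C^{\rho'} \hookrightarrow C^{\rho}$ on both levels, then yields tightness of the laws of $(\mathbf{B}^{H_n})_{n \geq N}$ in $d_\rho$.

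It remains to identify the weak limit, for which I would prove convergence of the finite-dimensional distributions using the Mandelbrot--van Ness coupling from Section \ref{sec:MvN}, where all processes $B^{H}$ are built from one and the same Brownian motion. The first level converges even almost surely and uniformly in $t$ by Theorem \ref{thm:whole-regularity} (taking $t'=t$ there). For the second level, for fixed $0 \leq s \leq t \leq T$ the iterated integral $\mathbf{B}^{\mathbf{2},H}_{s,t}$ lies in the second Wiener chaos with an explicit kernel assembled from $C_H$ and $K_H$; since $H \mapsto (C_H, K_H)$ is continuous, I would establish $L^2(\Omega)$-convergence $\mathbf{B}^{\mathbf{2},H_n}_{s,t} \to \mathbf{B}^{\mathbf{2},H_0}_{s,t}$ via $L^2$-convergence of these kernels together with dominated convergence. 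Joint convergence in probability of the finite-dimensional laws then follows, and with tightness this identifies the limit as $\mathbf{B}^{H_0}$. Alternatively, one may bypass the explicit coupling and invoke the theory of Gaussian rough paths (e.g., \cite{FrizVictoir}), which furnishes a continuous map from the covariance of a centred Gaussian process, measured in two-dimensional $\varrho$-variation with $\varrho = 1/(2H) < 3/2$ for $H > 1/3$, to the law of its canonical lift; one then only needs $R_{H_n} \to R_{H_0}$ of the fBm covariances in this mixed-variation norm, uniformly for $n \geq N$.

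The main obstacle is the second-level step. For $H \leq 1/2$ the area $\mathbf{B}^{\mathbf{2},H}$ is not a pathwise Riemann--Stieltjes integral but exists only as a limit, and its antisymmetric part is a genuinely second-chaos object whose dependence on $H$ must be controlled quantitatively. Proving the uniform second-chaos moment bounds and the $L^2$-convergence of the area as $H_n \to H_0$ --- equivalently, the convergence of the fBm covariances in the appropriate two-dimensional $\varrho$-variation norm with $\varrho$ kept below the critical value $3/2$ --- is the delicate part, precisely because this threshold degenerates as $H \downarrow 1/3$, so the estimates must be taken uniformly over a parameter range that can approach the boundary of validity of the rough path construction.
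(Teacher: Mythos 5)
Your proposal is correct and follows essentially the same route as the paper's source for this statement: the paper gives no proof of its own but cites Example 44 of \cite{FrizVictoir10} and Theorem 24 of \cite{DeVecchiEtAl}, both of which rest precisely on the local Lipschitz continuity of the It\^o--Lyons map combined with convergence of the canonical Gaussian rough path lifts, controlled through the fBm covariance in two-dimensional $\varrho$-variation --- i.e.\ the ``alternative'' you sketch in your third paragraph. One minor remark: since $H_0>1/3$, the $H_n$ eventually lie in a compact subinterval of $(1/3,1/2]$ bounded away from $1/3$, so the degeneracy of the $\varrho$-variation threshold at $H\downarrow 1/3$ that you flag as the main obstacle does not actually arise here.
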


 	\bigskip
 	\bigskip
 	\medskip

 	\section{Further results}
 	
 In this section, we shortly discuss further results from the literature, starting with Wiener integrals with respect to fBm.
 For this, we need to define the space  $\mathcal{E}$ of simple functions  from $[0,T]$ to $\R$ which contains all functions
 $$
 f=\sum_{i=1}^{N} f_{i} \mathbf{1}_{\left[u_{i}, v_{i}\right)}, 
 $$
 with arbitrary $N \in \mathbb{N}$,  $f_{i} \in \R$ and $[u_{i}, v_{i}) \subset [0,T]$, $i=1, \ldots, N$. For these integrands the Wiener integral
with respect to fBm is defined as
 $$
 \mathcal{I}^{H}(f):=\sum_{i=1}^{N} f_{i}(B_{v_{i}}^{H}-B_{u_{i}}^{H}).
 $$
 Exploiting that 
 $$
 \langle f, g \rangle_H= {\mathbf{E}} [I_{1}^{H}(f) I_{1}^{H}(g)], \qquad f,g \in \mathcal{E},
 $$
 defines a scalar product, the domain of $\mathcal{I}^H$ can be extended to integrands from a suitable Hilbert space $\mathcal{L}^{H}$. While for $H=1/2$ one obtains the standard $L^2([0,T];\R)$, for $H \neq 1/2$ the spaces are more involved. 
 
 For  $H<  1/2$ an elegant representation has been obtained in Theorem 2.5 in \cite{bardinajolisWiener}.  In fact, one has
 $$
 \mathcal{L}^{H}=\left\{f \in L^{2}([0, T]; \R):\|f\|_{H}< \infty\right\},
 $$
 where
 \begin{equation*}
 	\|f\|_{H}^2 =\frac{ H(1-2 H)}{2} \int_{0}^{T} \int_{0}^{T} \frac{(f(x)-f(y))^{2}}{|x-y|^{2-2 H}} \dd x \dd y+H \int_{0}^{T} f(x)^{2}\left(\frac{1}{x^{1-2 H}}+\frac{1}{(T-x)^{1-2 H}}\right) \dd x . 
 \end{equation*}
 For $H \geq 1/2$ the space $\mathcal{L}_H$ contains distributions, see, e.g., \cite{jolisDistr}, but we have $L^2([0,T];\R)  \subset \mathcal{L}_H$.

 In both cases the process
 $$  \mathcal{I}_t^{H}(f):= \mathcal{I}_t^{H}(f\mathbf{1}_{\left[0, t\right)}), \qquad t \in [0,T],$$
 has a version with continuous sample paths, whose law depends   continuously on the Hurst parameter; see Theorem 3.5 in \cite{jolis2010continuity} for the case $H_0 \leq 1/2$ and Theorem 3.1 in \cite{jolis2007continuity} for the case $H_0 \geq 1/2$.
 
 \begin{theorem}
 (a)  Let $H_{0} \in (0, 1/2 ]$, $\lambda \in (0,H_0)$ and $H \in (\lambda,1/2]$. Moreover, let $f \in \mathcal{L}^{\lambda}$.  Then, we have  $$ \mathcal{I}^{H}(f) \stackrel{\mathcal{L}}{\longrightarrow} \mathcal{I}^{H_0}(f), \qquad H \rightarrow H_0,$$
 in $C([0,T];\R)$. \\
 (b)	 Let  $H_{0} \in [1/2, 1)$ and $H \in (1/2,1)$. Moreover, let $f \in L^2([0,T];\R)$.Then, we have  $$ \mathcal{I}^{H}(f) \stackrel{\mathcal{L}}{\longrightarrow} \mathcal{I}^{H_0}(f), \qquad H \rightarrow H_0,$$
 in $C([0,T];\R)$.
 \end{theorem}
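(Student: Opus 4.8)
The plan is to establish both statements through the classical two-step criterion for weak convergence in $C([0,T];\R)$, namely convergence of the finite-dimensional distributions together with tightness of the family $\{\mathcal{I}^H(f)\}$. Since every $\mathcal{I}^H(f)$ is a centred Gaussian process starting at $\mathcal{I}_0^H(f)=0$, its law is completely determined by the covariance function
\[ \mathbf{E}\big[\mathcal{I}_s^H(f)\,\mathcal{I}_t^H(f)\big] = \big\langle f\mathbf{1}_{[0,s)},\, f\mathbf{1}_{[0,t)}\big\rangle_H, \qquad s,t \in [0,T]. \]
Hence the convergence of the finite-dimensional distributions reduces to showing that, for all $s,t \in [0,T]$, the map $H \mapsto \langle f\mathbf{1}_{[0,s)}, f\mathbf{1}_{[0,t)}\rangle_H$ is continuous at $H_0$, and the whole argument becomes a matter of analysing the $H$-dependence of a single bilinear form.

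For part (a) I would insert the explicit formula for $\|\cdot\|_H$ (and the associated bilinear form obtained by polarisation) and pass to the limit $H \to H_0$ by dominated convergence. The hypothesis $f \in \mathcal{L}^\lambda$ with $\lambda < H_0 \le H$ is used here twice: it yields the nesting $\mathcal{L}^\lambda \subset \mathcal{L}^H$, so that the relevant quantities are well-defined for all $H$ close to $H_0$; and, since on $[0,T]$ the kernels $1/|x-y|^{2-2H}$, $x^{2H-1}$ and $(T-x)^{2H-1}$ become less singular as $H$ increases, it furnishes an $H$-uniform integrable majorant given by the corresponding integrands at $\lambda$. For part (b), valid for $f,g \in L^2([0,T];\R)$, one instead uses the representation
\[ \langle f,g\rangle_H = H(2H-1)\int_0^T\!\!\int_0^T f(x)\,g(y)\,|x-y|^{2H-2}\dd x\dd y, \]
and again dominated convergence—now with majorant built from the integrable kernel $|x-y|^{2H_0-2}$—delivers continuity of $H \mapsto \langle f\mathbf{1}_{[0,s)}, f\mathbf{1}_{[0,t)}\rangle_H$ at $H_0$.

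For tightness I would exploit Gaussianity once more. The increment $\mathcal{I}_t^H(f)-\mathcal{I}_s^H(f) = \mathcal{I}^H(f\mathbf{1}_{[s,t)})$ is centred Gaussian, so that for every $m \in \N$
\[ \mathbf{E}\big[|\mathcal{I}_t^H(f)-\mathcal{I}_s^H(f)|^{2m}\big] = (2m-1)!!\,\big\|f\mathbf{1}_{[s,t)}\big\|_H^{2m}. \]
It therefore suffices to prove an estimate $\|f\mathbf{1}_{[s,t)}\|_H^2 \le C\,|t-s|^{2\gamma}$ for some $\gamma > 0$ and some constant $C$ that is uniform for $H$ in a neighbourhood of $H_0$; choosing $m$ with $2m\gamma > 1$ and invoking the Kolmogorov--Chentsov criterion then yields tightness in $C([0,T];\R)$ (indeed a uniform modulus of continuity). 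Combining the two steps via Prokhorov's theorem, and using that the limit law is uniquely identified by its covariance as that of $\mathcal{I}^{H_0}(f)$, gives $\mathcal{I}^H(f) \stackrel{\mathcal{L}}{\longrightarrow} \mathcal{I}^{H_0}(f)$.

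The main obstacle is the boundary value $H_0 = 1/2$ in part (a) (and the matching one-sided limit $H \downarrow 1/2$ in part (b)). There the normalising constant $H(1-2H)$ degenerates to zero while the singular double integral may simultaneously diverge, so that it is the product of the two—rather than each factor separately—that must be controlled, and the boundary weight term must reproduce the standard $L^2$-norm in the limit. Securing both the continuity of the covariance and the $H$-uniform increment bound near $H=1/2$, using only the regularity encoded in $f \in \mathcal{L}^\lambda$, is the technical heart of the argument; the detailed estimates are carried out in Theorem 3.5 of \cite{jolis2010continuity} for part (a) and Theorem 3.1 of \cite{jolis2007continuity} for part (b).
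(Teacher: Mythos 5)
Your proposal is correct and follows exactly the strategy the paper attributes to the cited works: convergence of the finite-dimensional distributions of the Gaussian processes $\mathcal{I}^H(f)$ (reduced to continuity in $H$ of the covariance $\langle f\mathbf{1}_{[0,s)}, f\mathbf{1}_{[0,t)}\rangle_H$) combined with tightness via uniform Gaussian increment estimates and Kolmogorov's criterion. The paper itself gives no further details beyond this outline and defers, as you do, to Theorem 3.5 of \cite{jolis2010continuity} and Theorem 3.1 of \cite{jolis2007continuity} for the technical estimates.
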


 For $H \geq 1/2$ this result has been extended to multiple Wiener integrals, see Theorem 4.2 in \cite{jolis2007continuity}. Moreover, the multivariate version of the above theorem is also valid.
 	
 	\smallskip
 	
 The symmetric Russo--Vallois integral has been introduced in 1993, see \cite{RV}, as a generalization of the Stratonovich integral. For a stochastic process $(U^H_t)_{t\in [0,T]}$ whose sample paths are integrable, the symmetric Russo--Vallois integral with respect to fBm is defined as the limit in probability
 $$  \mathcal{I}_T(U^H,B^H):= \lim_{\varepsilon \rightarrow 0}\frac{1}{2 \varepsilon}\int_0^T U_s^H \Big(B^{H}_{s +\varepsilon}-B^{H}_{s-\varepsilon} \Big)  \dd s, $$
 provided that this limit exists. Here  we use the convention that $B_t^H=0$ if $t \notin [0,T]$. The existence of the limit can be verified by Malliavin calculus techniques using the relation between the Russo--Vallois integral and the Skorokhod integral, see, e.g., Proposition 2.2 in \cite{jolis2010symmetric}.  Moreover, the work \cite{OhashiRusso} establishes a relation between the controlled rough path integral and the Russo--Vallois integral for general Gaussian processes as integrators.

 The main result, i.e., Theorem 3.13 of \cite{jolis2010symmetric} is as follows:
 Assume that $H,H_0 \in [1/2,1)$ and
 $$ (U^H,B^H) \stackrel{\mathcal{L}}{\rightarrow} (U^{H_0},B^{H_0}) $$
 in ${C}([0,T];\R) \otimes {C}([0,T];\R)$ for $H \rightarrow H_0$. Then, we also have
  $$ \mathcal{I}_T(U^H,B^H) \stackrel{\mathcal{L}}{\rightarrow} \mathcal{I}_T(U^{H_0},B^{H_0}) $$
 in ${C}([0,T];\R)$ for $H \rightarrow H_0$ under  Malliavin regularity conditions on the process $U^H=(U_t^H)_{t \in [0,T]}$.

\smallskip

The proofs of all the above results for stochastic integrals with respect to fBm rely on arguments involving the convergence of the finite dimensional distributions for $H \rightarrow H_0$ as well as tightness arguments.  Using the same approach,  the continuous dependence of the law of the local time of fBm on the Hurst parameter is established in \cite{jolis2007local}.

\smallskip

Finally, in \cite{GiordanoJolisQS,anderson} stochastic partial differential equations with fractional noise
are considered. For example, in \cite{GiordanoJolisQS} the stochastic heat equation
\begin{align*}
	\frac{\partial}{\partial t} u^{H}(t, x)& =\frac{\partial^{2} }{\partial x \partial x}u^{H}(t, x)+b(u^{H}(t, x))+\frac{\partial^{2} }{ \partial t \partial x}W^{H}(t, x), \\ 
	u^{H}(0, x)&=u_{0}(x),  
\end{align*} on $[0,T] \times \R$
is studied. Here, the initial condition  $u_0\colon \R \rightarrow \R$ and  the drift coefficient $b\colon \mathbb{R} \rightarrow \mathbb{R}$ satisfy suitable assumptions, and $W^H$ is a Gaussian field with covariance
	$$
		\mathbf{E}\big[W^{H}(t, x) W^{H}(s, y)\big]  =\frac{1}{2} \min \{s,t\} \left(|x|^{2 H}+|y|^{2 H}-|x-y|^{2 H}\right), \quad s,t \in [0,T], \,\,x,y \in \R.
	$$
Using the continuity of the solution map of the stochastic heat equation with respect to the driving noise, the authors obtain the following result (Theorem 4.1 in  \cite{GiordanoJolisQS}): 
\begin{theorem}
Let $\alpha \in (0,1)$ and let $(H_n)_{n \in \mathbb{N}} \subset (0,\alpha]^{\mathbb{N}}$ be a sequence such that $H_0=\lim_{n \rightarrow \infty} H_n \in (0,\alpha)$. Moreover, let  $u_0 \in C^{\alpha}(\R;\R)$ and let $b \colon \mathbb{R} \rightarrow \mathbb{R} $ be globally Lipschitz continuous. Then, the above stochastic heat equation has a unique mild solution for all $H_n$, $n \in \mathbb{N}_0$. Moreover, we have $$u^{H_n} \stackrel{\mathcal{L}}{\longrightarrow} u^{H_0}, \qquad n \rightarrow \infty, $$ where the convergence holds in distribution in the space $C([0,T ] \times \R; \R)$ endowed with the metric of uniform convergence on compact subsets.
\end{theorem}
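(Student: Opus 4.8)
The plan is to reduce the statement to a convergence-in-law result for the noise-driven part of the equation and then transfer it to the solution through a deterministic, $H$-independent solution map, following exactly the \emph{finite-dimensional convergence plus tightness} scheme announced for the stochastic-integral results above. Writing $G$ for the heat kernel and
$$ v^H(t,x)=\int_0^t\int_\R G(t-s,x-y)\,W^H(\dd s,\dd y) $$
for the stochastic convolution, the mild solution satisfies
$$ u^H(t,x)=\int_\R G(t,x-y)\,u_0(y)\,\dd y+\int_0^t\int_\R G(t-s,x-y)\,b\big(u^H(s,y)\big)\,\dd y\,\dd s+v^H(t,x). $$
I would introduce the map $\Psi$ sending a continuous field $w$ to the unique continuous $u$ solving the deterministic integral equation obtained by replacing $v^H$ by $w$ above, so that $u^H=\Psi(v^H)$, with $v^H$ playing the role of the driving input.

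First I would settle existence and uniqueness for each fixed $H=H_n$ by a Picard/Banach fixed-point argument: under the standing assumptions, and because the $H_n$ eventually lie in a compact $[a,b]\subset(0,\alpha)$ around $H_0$, the field $v^H$ admits a version with continuous sample paths and finite moments, while the globally Lipschitz drift $b$ together with the smoothing of the heat semigroup on short time intervals (iterated over $[0,T]$) yields a unique fixed point. The same Lipschitz-plus-Gronwall estimate shows that $\Psi$ is continuous — indeed locally Lipschitz — from $C([0,T]\times\R;\R)$ into itself in the topology of uniform convergence on compact sets. This is the \emph{continuity of the solution map with respect to the driving noise}; the decisive point is that $\Psi$ is deterministic and independent of $H$.

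By the continuous mapping theorem it then suffices to prove $v^{H_n}\stackrel{\mathcal{L}}{\longrightarrow}v^{H_0}$ in $C([0,T]\times\R;\R)$. As each $v^H$ is a centered Gaussian field, this splits into convergence of the finite-dimensional distributions and tightness. For the former I would use the spatial spectral representation of the fractional noise, whose spectral measure is a constant multiple of $|\xi|^{1-2H}\,\dd\xi$, to write
$$ \mathbf{E}\big[v^H(t,x)\,v^H(t',x')\big]=c_H\int_0^{t\wedge t'}\!\!\int_\R e^{-(t-s)\xi^2}\,e^{-(t'-s)\xi^2}\cos\big(\xi(x-x')\big)\,|\xi|^{1-2H}\,\dd\xi\,\dd s, $$
and pass to the limit $H_n\to H_0$ by dominated convergence, the Gaussian factors supplying an integrable majorant uniformly for $H\in[a,b]$; convergence of covariances gives convergence of the finite-dimensional distributions for Gaussian fields. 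For tightness I would verify a Kolmogorov criterion on each compact rectangle, bounding the increment variance $\mathbf{E}[|v^H(t,x)-v^H(t',x')|^2]$ through the same spectral integral uniformly in $H\in[a,b]$, and then upgrading to an $L^p$-increment bound by Gaussian hypercontractivity, with the resulting Hölder exponents controlled by $\alpha$.

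Combining the continuous mapping theorem with $v^{H_n}\stackrel{\mathcal{L}}{\longrightarrow}v^{H_0}$ and the continuity of $\Psi$ yields $u^{H_n}=\Psi(v^{H_n})\stackrel{\mathcal{L}}{\longrightarrow}\Psi(v^{H_0})=u^{H_0}$, which is the assertion. I expect the main obstacle to be the tightness step, and specifically the need for increment estimates of $v^H$ that are uniform in $H$ across a neighborhood of $H_0$: the spectral density $|\xi|^{1-2H}$ is singular and its exponent moves with $H$, so the spatial and temporal increment bounds must be tracked carefully — the hypothesis $H\le\alpha<1$ (with $u_0\in C^{\alpha}$ making the deterministic part $G_t*u_0$ compatible) is precisely what keeps the attained Hölder exponents positive and bounded away from the critical regime. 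A secondary difficulty is that the spatial domain is all of $\R$, so one works with uniform convergence on compacts and applies the Kolmogorov criterion rectangle by rectangle via Arzelà–Ascoli rather than globally, and one must check that the continuous versions of $v^H$ can be chosen so that $u^H=\Psi(v^H)$ holds as an identity of continuous fields.
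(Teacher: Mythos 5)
Your proposal follows exactly the strategy the paper indicates for this result (which it cites from Theorem 4.1 of \cite{GiordanoJolisQS} without reproducing the proof): continuity of the deterministic solution map with respect to the driving noise, combined with convergence in law of the Gaussian input established via convergence of covariances (finite-dimensional distributions) plus Kolmogorov-type tightness estimates uniform in $H$ on a compact neighbourhood of $H_0$. The technical points you flag --- uniform-in-$H$ increment bounds for the singular spectral density $|\xi|^{1-2H}$ and the need to control spatial growth so that the solution map is continuous for the topology of uniform convergence on compacts --- are indeed where the work lies, and your outline treats them in the standard way.
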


 Similar results hold for the stochastic wave equation with additive fractional noise for $H \in (0,1)$, see also Theorem 4.1 in  \cite{GiordanoJolisQS},  as well as for the hyperbolic and parabolic Anderson model with multiplicative fractional noise for $H \in (1/4,1)$, see Theorem 1.1 in \cite{anderson}.

\bigskip
\bigskip

{\bf Acknowledgements.} \,\,  {\it Andreas Neuenkirch wants to thank the organizers of 
	the CIRM-research school `Stochastic and Deterministic Analysis for Irregular Models' for hosting this instructive event.}


\begin{thebibliography}{10}
	
\bibitem{bardinajolisWiener}	
	X.~Bardina and M.~Jolis. \newblock 
	Multiple fractional integral with Hurst parameter less than $1/2$.
	\newblock
 {\em Stochastic Processes Appl.}, 116(3): 463--479, 2006. 




\bibitem{gatheral_bayer}
C.~Bayer, P.~Friz and  J.~Gatheral.
\newblock Pricing under rough volatility.
\newblock \emph{Quant. Finance}, 16(6): 887–904, 2015.
	
	
\bibitem{BayerSinum}
C.~Bayer, P.~Friz, S. Riedel and J. Schoenmakers.
\newblock From rough path estimates to multilevel Monte Carlo.
\newblock {\em SIAM J. Numer. Anal.}, 54(3): 1449--1483, 2016.


\bibitem{Bender}
C.~Bender, T.~Sottinen and E.~Valkeila. \newblock 
 Pricing by hedging and no-arbitrage beyond semimartingales.
\newblock {\em   Finance Stoch.}, 12: 441--468, 2008. 

\bibitem{cheridito2003fractional}
P.~Cheridito, H.~Kawaguchi and M.~Maejima.
\newblock Fractional {O}rnstein-{U}hlenbeck processes.
\newblock {\em Electron. J. Probab.}, 8(3): 1--14, 2003.


\bibitem{CoutinQian}
L.~Coutin and Z.~Qian.
\newblock Stochastic analysis, rough path analysis and fractional Brownian
motions.
\newblock {\emph{Probab. Theory Relat. Fields}}, 122:  108--140,
2002.


\bibitem{Davie}
A.M.~Davie. \newblock Differential equations driven by rough paths: an approach via discrete approximation.
\newblock  {\emph{Appl. Math. Res. Express}}. AMRX, 2008(2):1–40, 2008.



\bibitem{DMS} 
G.~Denk, D.~Meintrup and S.~Sch\"affler. \newblock 
Transient noise simulation: Modeling and simulation of $1/f$-noise. 
In: {\emph{Antreich, K. (ed.) et al., Modeling, simulation,
	and optimization of integrated circuits.} }
Birkh\"auser.  Int. Ser. Numer. Math. 146, p. 251--267, 2001.

\bibitem{DR}
G.~Denk and R.~Winkler. \newblock
Modelling and simulation of transient noise in circuit simulation. \newblock
{\emph{Math. Comput. Model. Dyn. Syst.}} 13(4): 383--394, 2007.

\bibitem{DeVecchiEtAl}
F.C.~De Vecchi,  L.M.~Giordano, D.~Morale and S.~ Ugolini. 
\newblock {A note on the continuity in the Hurst index of the solution of rough
	differential equations driven by a fractional Brownian motion}.
\newblock {\em Stoch. Anal. Appl.}, 39(4): 697--711, 2021.

\bibitem{Doss}
H.~Doss.
\newblock Liens entre {\'e}quations diff{\'e}rentielles stochastiques et
ordinaires.
\newblock \emph{Ann. Inst. Henri Poincaré, Nouv. Sér., Sect. B.}, 13(2): 99--125, 1977.

\bibitem{Fann_Komm}
 A.~Fannjiang and T.~Komorowski. 
 \newblock Fractional Brownian motions in a limit of turbulent transport.
 \newblock{ \em   Ann. Appl. Probab.}, 10(4): 1100--1120, 2000. 
 
 \bibitem{Flandoli}
F.~Flandoli, M.~Gubinelli and F.~Russo.
\newblock
 On the regularity of stochastic currents, fractional Brownian motion and applications to a turbulence model.
 \newblock
 {\em  Annal. Inst. H. Poincar\'e Probab. Statist.}, 45(2): 545--576, 2009.
 

\bibitem{FrizHairer}
P.~Friz and M.~Hairer.
\newblock \emph{{A Course on Rough Paths}}.
\newblock Springer, 2014.

\bibitem{FrizVictoir}
P.~Friz and N.~Victoir.
\newblock \emph{Multidimensional Stochastic Processes as Rough Paths}.
\newblock Cambridge University Press, 2010.

\bibitem{FrizVictoir10}
P.~Friz and N.~Victoir.
\newblock {Differential equations driven by Gaussian signals}.
\newblock \emph{Ann. Inst. H. Poincar{\'e} Probab. Statist.}, 46(2): 369--413, 2010.





\bibitem{GKN}
M.~Garrido-Atienza, P.~Kloeden and A.~Neuenkirch.
Discretization of stationary solutions of stochastic systems driven by fractional Brownian motion.  
{\it Appl. Math. Optim.},  60(2): 151--172, 2009.



\bibitem{gatheral1}
J.~Gatheral, K.~Thibault and M.~Rosenbaum.
\newblock {Volatility is rough}. \newblock
\emph{Quant. Finance},
18(6): 933--949, 2018.

\bibitem{GiordanoJolisQS}
L.M.~Giordano, M.~Jolis and L.~Quer-Sardanyons.
\newblock S{PDE}s with fractional noise in space: continuity in law with
respect to the {H}urst index.
\newblock {\em Bernoulli}, 26(1): 352--386, 2020.

\bibitem{anderson}
L.M.~Giordano, M.~Jolis and L.~Quer-Sardanyons.
\newblock 
SPDEs with linear multiplicative fractional noise: continuity in law with respect to the Hurst index. 
\newblock {\em Stochastic Processes Appl.}, 130(12): 7396--7430, 2020. 







\bibitem{guasoni}
P.~Guasoni. \newblock
No arbitrage under transaction costs, with fractional Brownian motion and beyond. \newblock
{\emph{Math. Finance}}, 16(3):  569--582, 2006.


\bibitem{gubinelli}
M.~Gubinelli. \newblock
Controlling rough paths.
\newblock
{\emph{J. Funct. Anal.}},
216(1): 86-140, 2004.




\bibitem{Hairer}
M.~Hairer.
\newblock Ergodicity of stochastic differential equations driven by fractional
{B}rownian motion.
\newblock {\em Ann. Probab.}, 33(2): 703--758, 2005.




\bibitem{HRstat}
E.M.~Haress and A.~Richard.
\newblock {Estimation of several parameters in discretely-observed
	stochastic differential equations with additive fractional noise}.
\newblock
{\em Stat. Inference Stoch. Process.}, 27: 641--691, 2024.



\bibitem{HRtheo}
E.M.~Haress and A.~Richard. {Long-time Hurst regularity of fractional stochastic differential equations and their ergodic means}. 
\newblock
{\em J. Theor. Probab.}, 38(1): paper no. 24, 57 p.,  2025.
	
 
 
\bibitem{fractional_regular}
  Y.~Hu and  D.~Nualart. \newblock	
{Differential equations driven by {H{\"o}lder} continuous functions of order greater than {{\(1/2\)}}}.
In: {\em  Benth, Fred Espen (ed.) et al., Stochastic analysis and applications. The Abel symposium 2005.} Berlin: Springer, p. 399--413, 2007.
 	 
 
 \bibitem{fractional_rough}
 Y.~Hu and  D.~Nualart. \newblock
 Rough path analysis via fractional calculus.
 \newblock {\em Trans. Am. Math. Soc.}, 361(5): 2689–2718, 2009.



\bibitem{hu2013multiparameter}
Y.~Hu and K.~Le.
\newblock A multiparameter {G}arsia-{R}odemich-{R}umsey inequality and some
applications.
\newblock {\em Stochastic Process. Appl.}, 123(9): 3359--3377, 2013.


\bibitem{Hurst}
H.E.~Hurst.
\newblock Long-term storage capacity in reservoirs.
\newblock \emph{Trans. Am. Soc. Civil Eng.}, 116: 770--799,
1951.

\bibitem{HuttonNelson}
J.E.~Hutton and P.I.~Nelson.
\newblock {Interchanging the order of differentiation and stochastic
	integration}.
\newblock \emph{Stochastic Process. Appl.}, 18(2): 371--377, 1984.

\bibitem{jolisDistr}
M.~Jolis.
\newblock On the Wiener integral with respect to the fractional Brownian motion on an interval.
\newblock \emph{J. Math. Anal. Appl.}, 330(2):   1115--1127, 2007.

\bibitem{jolis2007continuity}
M.~Jolis and N.~Viles.
\newblock Continuity with respect to the {H}urst parameter of the laws of the
multiple fractional integrals.
\newblock {\em Stochastic Process. Appl.}, 117(9): 1189--1207, 2007.


\bibitem{jolis2007local}
M.~Jolis and N.~Viles.
\newblock Continuity in law with respect to the Hurst parameter of the local time of the fractional Brownian motion.
 \newblock {\em  J. Theor. Probab.}, 20(2): 133--152, 2007.

\bibitem{jolis2010continuity}
M.~Jolis and N.~Viles.
\newblock Continuity in the {H}urst parameter of the law of the {W}iener
integral with respect to the fractional {B}rownian motion.
\newblock {\em Statist. Probab. Lett.}, 80(7-8): 566--572, 2010.


\bibitem{jolis2010symmetric}
M.~Jolis and N.~Viles.
\newblock 
 Continuity in the Hurst parameter of the law of the symmetric integral with respect to the fractional Brownian motion. 
\newblock {\em Stochastic Processes Appl.}, 120(9): 1651--1679,  2010. 

\bibitem{Koch}
S.~Koch.
\newblock \emph{Sensitivity results in stochastic analysis}.
\newblock PhD thesis, Universit\"at Mannheim, 2019.
\newblock  {https://madoc.bib.uni-mannheim.de/52018/}.



\bibitem{KN19}
S.~Koch and A.~Neuenkirch.
\newblock {The Mandelbrot--van Ness fractional Brownian motion is infinitely
	differentiable with respect to its Hurst parameter}.
\newblock \emph{Discrete Contin. Dyn. Syst. - B.}, 24(8): 3865--3880, 2019.


\bibitem{kolmogorov}
A.N.~Kolmogoroff.
\newblock {Wienersche {Spiralen} und einige andere interessante {Kurven} im {Hilbertschen} {Raum}}.
\newblock  \emph{C. R. (Dokl.) Acad. Sci. URSS, n. Ser.}, 26: 115--118, 1940.





\bibitem{ko}
S.~Kou.
\newblock Stochastic modeling in nanoscale biophysics: subdiffusion within proteins.
\newblock {\emph{Ann. Appl. Stat.}}, 2(2):  501--535, 2008.


\bibitem{ks}
S.~Kou and X.~Sunney-Xie.
\newblock
Generalized Langevin equation with
fractional Gaussian noise: subdiffusion within a single protein molecule.
\newblock   {\emph{Phys. Rev. Lett.}} 93, no. 18, 2004.

\bibitem{lindeloef}
E.~Lindel\"of.
\newblock Sur l’application de la m\'ethode des approximations successives aux \'equations différentielles ordinaires du premier ordre.
\newblock \emph{C. R. Acad. Sci., Paris}, 118: 454--457, 1894.



\bibitem{TL1}
T.~Lyons.
\newblock {Differential equations driven by rough signals. I. An extension of
	an inequality of L.C. Young}.
\newblock \emph{Math. Res. Lett.}, 1(4): 451--464, 1994.

\bibitem{TL2}
T.~Lyons.
\newblock {Differential equations driven by rough signals}.
\newblock \emph{Rev. Mat. Iberoamericana}, 14(2): 215--310,
1998.


\bibitem{Lyons_Qian}
T.J.~Lyons and Z.~Qian.
\newblock \emph{System Control and Rough Paths}. \newblock Oxford University Press, Oxford, 2002.


\bibitem{LyonsNotes}
T.J. Lyons, M.~Caruana and T.~L{\'e}vy.
\newblock \emph{Differential equations driven by rough paths: Ecole d'Et{\'e}
	de Probabilit{\'e}s de Saint-Flour XXXIV - 2004}.
\newblock Springer, 2007.



\bibitem{MandelbrotVanNess}
B.B.~Mandelbrot and J.W.~{van Ness}.
\newblock {Fractional Brownian motions, fractional noises and applications}.
\newblock \emph{SIAM Rev.}, 10(4): 422--437, 1968.

\bibitem{hydro}
F.J.~Molz, H.H.~Liu and J. Szulga.
\newblock Fractional Brownian motion and fractional Gaussian noise in subsurface hydrology: A review, presentation of fundamental properties, and extensions.
\newblock \emph{Water Resour. Res.}, 33(10): 2273--2286, 1997.

\bibitem{NTsisp}
S.~Neuenkirch and  S.~Tindel.
\newblock A least square-type procedure for parameter estimation in stochastic differential equations with additive fractional noise. \newblock
\emph{Stat. Inference Stoch. Process.}, 17, 99--120, 2014. 


\bibitem{nourdin_book}
I.~Nourdin.
\newblock \emph{{Selected aspects of fractional Brownian motion}}.
\newblock Springer, Milano, 2012.

\bibitem{nualartbook}
D.~Nualart.
\newblock {\em The {M}alliavin calculus and related topics}.
\newblock Springer-Verlag, Berlin,
2nd edition, 2006.

\bibitem{NR}
D.~Nualart and A.~{R\u a\c scanu}.
\newblock {Differential equations driven by fractional Brownian motion}.
\newblock \emph{Collect. Math.}, 53(1): 55--81, 2002.

\bibitem{NualartSauss}
D.~Nualart and B.~Saussereau.
\newblock Malliavin calculus for stochastic differential equations driven by a
fractional Brownian motion.
\newblock \emph{{Stochastic Processes Appl.}}, 119(2):
391--409, 2009.


\bibitem{OhashiRusso}
A.~Ohashi and F.~Russo.
\newblock Rough paths and symmetric-{Stratonovich} integrals driven by singular covariance {Gaussian} processes.
\newblock {\em Bernoulli}, 30(2): 1197--1230, 2024.


\bibitem{pelletier}
R.-F.~Peltier and J. L\'evy V\'ehel.
\newblock Multifractional Brownian motion: Definition and preliminary results.
\newblock \emph{Research Report} RR-2645, INRIA, 1995, inria-00074045.

\bibitem{picard}
\'E.~Picard.
\newblock
Sur l’application des m\'ethodes d’approximations successives \`a l’\'etude de certaines \'equations diff\'erentielles ordinaires.
\newblock{\em Journ. de Math. (4)}, IX: 217--271, 1893.

\bibitem{richard2015fractional}
A.~Richard.
\newblock A fractional {B}rownian field indexed by {$L^2$} and a varying
{H}urst parameter.
\newblock {\em Stochastic Process. Appl.}, 125(4): 1394--1425, 2015.




\bibitem{RichardTalay:prem}
 A.~Richard and D.~Talay.
\newblock {Noise sensitivity of functionals of fractional Brownian motion
	driven stochastic differential equations: Results and perspectives}.
\newblock In \emph{Springer Proc. Math. Stat.}, vol. 208, p. 219--235,
2017.

\bibitem{Richard:train}
A.~Richard, P.~Orio and E.~Tanr\'e:
\newblock {An integrate-and-fire model to generate spike trains with long-range dependence.}
\newblock \emph{J. Comput. Neurosci.}, 44(3): 297--312, 2018.



\bibitem{RichardTalay}
 A.~Richard and D.~Talay. 
 \newblock  Lipschitz continuity in the Hurst parameter of functionals of stochastic differential equations driven by a fractional Brownian motion.
 \newblock
 \emph{Electron. J. Probab.}, 29: 1--70, 2024. 
 
 \bibitem{RV}
F.~Russo and P.~Vallois.
\newblock Forward, backward and symmetric stochastic integration. 
\newblock \emph{Probab. Theory Relat. Fields}, 97(3): 403--421, 1993. 
 
 \bibitem{Sch-diss}
 G.~Sch\"ochtel.
 \newblock Motion of inertial particles in Gaussian fields driven by an infinite-dimensional fractional Brownian motion, 
\newblock  \emph{Dynam. Syst.}, 27(4): 431--457, 2012.
 
 
 \bibitem{Sussmann}
 H.~Sussmann.
 \newblock On the gap between deterministic and stochastic ordinary differential
 equations.
 \newblock \emph{Ann. Probab.}, 6(1):  19--41, 1978.
 
 \bibitem{taqqu}
 M.S.~ Taqqu. \newblock
 Fractional {Brownian} motion and long-range dependence.
 \newblock In \emph{Doukhan, P. (ed.) et al., Theory and applications of long-range dependence.}
 Birkh\"auser, p. 5--38, 2003.

 
\bibitem{Young}
L.C.~Young.
\newblock {An inequality of the H{\"o}lder type, connected with Stieltjes
	integration}.
\newblock \emph{Acta Math.}, 67: 251--282, 1936.



\bibitem{zaehle}
M. Z\"ahle. \newblock
 Integration with respect to fractal functions and stochastic calculus. I.
 \newblock \emph{Probab. Theory Relat. Fields}, 111(3): 333--374, 1998.


\end{thebibliography}
\end{document}